\documentclass{amsart}
\usepackage{amssymb}
\usepackage{graphicx}
\usepackage{amscd}
\usepackage{color}
\allowdisplaybreaks
\usepackage{tikz}
\makeatletter
\@namedef{subjclassname@2020}{\textup{2020} Mathematics Subject Classification}
\makeatother

\newcommand{\ignore}[1]{}

\numberwithin{figure}{section}
\numberwithin{table}{section}




\newcommand\tr{\operatorname{tr}}

\newcommand\vol{\mathsf{vol}}

\newcommand\R{\mathbb{R}}

\newcommand\D{d}

\newcommand\I{{\mathcal I}}

\newcommand\Lie{\mathcal L}
\renewcommand\P{{\mathcal P}}

\renewcommand\S{{\mathcal S}}

\newcommand{\0}{\mathaccent23}




\newcommand\Alt{\operatorname{Alt}}


\numberwithin{equation}{section}

\newtheorem{thm}{Theorem}[section]
\newtheorem{prop}[thm]{Proposition}
\newtheorem{lem}[thm]{Lemma}



\begin{document}

\parskip=10pt plus 2pt minus 2pt
\title[Cochain extensions by blending]
{Construction of polynomial preserving cochain extensions by blending}
\author{Richard S. Falk}
\address{Department of Mathematics,
Rutgers University, Piscataway, NJ 08854}
\email{falk@math.rutgers.edu}
\thanks{}
\author{Ragnar Winther}
\address{Department of Mathematics,
University of Oslo, 0316 Oslo, Norway}
\email{rwinther@math.uio.no}
\subjclass[2020]{Primary:  65N30, 65D17, 65D18}
\keywords{blending methods, simplices, differential forms, cochain extensions, 
preservation of polynomial spaces}
\date{February 3, 2022}
\thanks{The research leading to these results has received funding from the  European Research Council under the European Union's Seventh Framework Programme (FP7/2007-2013) / ERC grant agreement 339643. }

\begin{abstract}
  A classical technique to construct polynomial preserving extensions
  of scalar functions defined on the boundary of an $n$ simplex to the
  interior is to use so-called rational blending functions.  The
  purpose of this paper is to generalize the construction by blending
  to the de Rham complex.  More precisely, we define polynomial
  preserving extensions which map traces of  $k$ forms
  defined on the boundary of the simplex to $k$ forms defined in the interior.
  Furthermore, the extensions are cochain maps, i.e., they commute
  with the exterior derivative.
\end{abstract}

\maketitle

\section{Introduction}
\label{sec:intro}

In applications such as the finite element approximation of partial
differential equations and in computer aided geometrical design
problems, there often arises a need for a method for extending a
piecewise smooth function given on the boundary of a domain to the
entire domain, in particular when the domain is a hypercube or a
simplex.  There is a considerable literature on this subject, dating
back to early work of Coons \cite{Coons} in the context of computer
aided design, while more mathematical oriented studies of such
problems were initiated in \cite{Barnhill-Birkhoff-Gordon, Birkhoff}.
The constructions were often referred to as {\it transfinite
  interpolation} or {\it blending function methods}, since the
extension is obtained by combining, or blending, the transfinite
boundary data using rational or polynomial basis functions.  In
\cite{Mansfield2}, the two dimensional scheme described in
\cite{Barnhill-Birkhoff-Gordon} was generalized to the case of
tetrahedra, and with a brief discussion of its generalization to
$n$ simplices. Alternative approaches, using polynomial rather than
rational blending functions, were studied in \cite{Barnhill-Gregory,
  Gordon-Hall,Gregory2,Perronnet}.  A summary of much of this early work can be
found in \cite{Barnhill}.

More recently, the study of extension operators that preserve a
polynomial structure of the boundary data have played a key role in
the analysis of finite element methods of high polynomial order,
cf. \cite{Ainsworth-Demkowicz, MR899702,
  Bernardi-Dauge-Maday, Munoz-Sola}.  In particular, the importance of
such extensions that commute with the exterior derivative, i.e., {\it
  cochain extensions}, was illustrated by the analysis given in
\cite{MR2105164}.  The results of these papers further motivated the
theory developed in the series of three papers, \cite{DGS-extension3,
  DGS-extension1, DGS-extension2}, where polynomial preserving cochain
extensions are constructed for the de Rham complex in three
dimensions.  An important additional property of these extensions is
that they require only weak regularity of the boundary data to be well
defined.

The purpose of this paper is to extend the method of blending to
define polynomial preserving cochain extensions for differential forms
of arbitrary order on $n$ simplices.  More specifically, for
$\S_n = [x_0, x_1, \ldots , x_n] \subset \R^{n}$, an $n$ dimensional
simplex, we define extensions $E_n^k$ which map piecewise smooth $k$
forms defined on the boundary, $\partial \S_n$, to smooth forms on
$\S_n$, such that they preserve polynomial structures and commute with
the exterior derivative. For scalar-valued functions, or zero forms,
the extensions presented here correspond to operators defined in
\cite{Mansfield2}, while the general construction for higher order
forms appears to be new.

An outline of the paper is as follows. In Section~\ref{sec:prelim}, we
introduce some basic notation and recall the construction of
extensions by blending in the case of zero-forms, i.e., for
scalar-valued functions.  In particular, we verify that these
extensions are polynomial preserving.  We also present a summary of
the main results of the paper and an application of the construction.
In Section~\ref{sec:one-forms}, we discuss
the extension in the case of one-forms.  The explicit construction in
this basic case provides a motivation for the general construction for
$k$-forms to follow.  The discussion in Section~\ref{sec:one-forms}
also motivates the construction of a family of order-reduction
operators, presented in Section~\ref{sec:Rop}, which will play a key
role in designing the coefficient operators, $A_{I,J}^k$, that will be
used to define the extensions $E_n^k$.  A precise
  definition of the coefficient operators $A_{I,J}^k$ is given in
  Section~\ref{sec:AIJop}, and three key properties of these operators
  are established. In Section~\ref{sec:Tnk}, we then show that the
  operators $E_n^k$ are polynomial preserving cochain extensions.

\section{preliminaries}\label{sec:prelim}
\subsection{Notation}\label{sec:notation}
We will use $[\cdot, \ldots ,\cdot]$ to denote the simplex obtained by
convex combination of the arguments.  The simplex
$\S_n = [x_0, x_1, \ldots , x_n] \subset \R^{n}, \, n \ge 1,$ will be
considered to be a fixed $n$ simplex throughout this paper. The
barycentric coordinate associated to the vertex $x_i$ will be denoted by
$\lambda_i = \lambda_i(x)$, i.e., $\lambda_i$ is a linear function on
$\S_n$ satisfying $\lambda_i(x_j ) = \delta_{i,j}$ and such that
\[
x = \sum_{i=0}^n \lambda_i(x) x_i, \quad x \in \S_n.
\]
Furthermore, the boundary of $\S_n$, $\partial \S_n$, consists of all
$x \in \S_n$ such that $\lambda_i(x) = 0$ for at least one index
$i \in \{0, \ldots ,n\}$.  We will use $\Lambda^k(\partial \S_n)$ to
denote the space of piecewise smooth $k$ forms defined on
$\partial \S_n$. More precisely, each element
$u \in \Lambda^k(\partial \S_n)$ is smooth on each $(n-1)$ dimensional
subsimplex of $\partial \S_n$, and with single-valued traces at the
interfaces. Correspondingly, $\Lambda^k(\S_n)$ will denote the space
of smooth $k$ forms defined on $\S_n$  and
  $\0 \Lambda^k(\S_n)$ will denote the space of forms in
  $\Lambda^k(\S_n)$ with vanishing trace on $\partial \S_n$.
Our main goal is to
construct extension operators
$E_n^k : \Lambda^k(\partial \S_n) \to \Lambda^k(\S_n)$, $k = 0,1, \ldots, n-1$,
with desired properties.

We will use $d = d^k  : \Lambda^k(\S_n) \to 
\Lambda^{k+1}(\S_n)$ to denote the exterior derivative defined by 
\begin{equation*}
d u_x(v_1, \ldots, v_{k+1}) = \sum_{j=1}^{k+1} (-1)^{j+1} \partial_{v_j}
u_x(v_1, \ldots, \hat v_j, \ldots, v_{k+1}),
\end{equation*}
where the hat symbol, e.g., $\hat v_j$, is used to indicate a
suppressed argument and the vectors $v_j$ are elements of $\R^n$.  We
also use $u^1 \wedge u^2$ to denote the wedge product mapping a $j$
form $u^1$ and $k$ form $u^2$ into a $j+k$ form.
A smooth map $F: \S_n \to \partial \S_n$ 
provides a pullback of a differential form from $\partial \S_n$ to $\S_n$, 
i.e., a map from $\Lambda^k(\partial \S_n) \to \Lambda^k(\S_n)$ given by
\begin{equation*}
(F^* u)_x(v_1, \ldots, v_k) 
= u_{F(x)}(D F_x(v_1), \ldots,  D F_x(v_k)).
\end{equation*}
The pullback respects exterior products and differentiation, i.e.,
\begin{equation*}
F^*(u^1 \wedge u^2) = F^* u^1 \wedge F^* u^2, \qquad
F^*(d u) = d (F^* u).
\end{equation*}
The pullback of the inclusion map of $\partial \S_n$ into $\S_n$ is
the trace map, $\tr_{\partial S_n}$, and we note that the spaces
$\Lambda^k(\partial \S_n)$ are precisely defined so that
$\Lambda^k(\partial \S_n) = \tr_{\partial S_n} \Lambda^k(\S_n)$.

For each integer $r >0$, the polynomial subspaces $\P_r\Lambda^k(\S_n)$
consist of all elements $u \in \Lambda^k(\S_n)$ such that for fixed
vectors $v_1, \ldots ,v_k$, the function $u_x(v_1, \ldots ,v_k)$, as a
function of $x$, is an element of $\P_r(\S_n)$, i.e., the space of
polynomials of degree less than or equal to $r$ defined on $\S_n$. The trimmed space
$\P_r^-\Lambda^k(\S_n)$ is the subspace consisting of all
$u \in \P_r\Lambda^k(\S_n)$ such that
$u \lrcorner (x-a) \in \P_r\Lambda^{k-1}(\S_n)$ for any fixed
$a \in \R^n$.  Here, the symbol  $\lrcorner$ is used to denote contraction, i.e., 
\[
(u \lrcorner (x-a))_x(v_1, \ldots , v_{k-1}) = u_x(x-a,v_1, \ldots , v_{k-1}).
\]
The corresponding spaces on the boundary are defined by
\[
\P_r\Lambda^k(\partial \S_n) = \tr_{\partial S_n} \P_r\Lambda^k(\S_n) 
\quad \text{and} \quad 
\P_r^-\Lambda^k(\partial \S_n) = \tr_{\partial S_n} \P_r^-\Lambda^k(\S_n).
\]
We refer to \cite{FEEC-book, acta, bulletin} for more details on the
polynomial and piecewise polynomial spaces of differential forms.

If $J= \{j_0, j_1, \ldots ,j_m\}$ is an ordered subset of
  $\{0,1, \ldots ,n \}$, but not necessarily increasingly ordered, we
  will refer to $J$ as an index set. We will use $\Gamma$ to denote
the set of all increasingly ordered subsets of $\{0,1, \ldots ,n
\}$.
In other words, if $J \in \Gamma$, then $J$ is an index set of the
form
\begin{equation*}
J = \{j_0, j_1, \ldots ,j_m\}, \quad \text{where } 
0 \le j_0< j_1\ldots < j_m \le n.
\end{equation*}
The number of elements in $J$ will be denoted $|J|$, and $\Gamma_m$ is
the subset of $\Gamma$ consisting of all $J \in \Gamma$ with $|J| =
m+1$. Furthermore, for any $I \in \Gamma$, we let
\[
\Gamma_m(I) = \{ J \in \Gamma_m \, : \, J \subset I \, \}.
\]
For an index set $J$, the complement of $J$ relative to
the set $\{0, 1,\ldots ,n\}$ will be denoted $J^c$ and 
$[x_J]$ will denote the simplex generated by the vertices
$\{ x_j \}_{j \in J}$, with orientation induced by the order of
$J$. Furthermore, $\lambda_J = \lambda_J(x)$ will denote the
corresponding sum of barycentric coordinates, i.e.,
$\lambda_J = \sum_{j \in J} \lambda_j$, and we will use $\phi_J$ to
denote the associated Whitney form, given by
\[
\phi_J = \sum_{i=0}^m (-1)^{i} \lambda_{j_i} d\lambda_{j_0} \wedge 
\ldots \wedge \widehat{d\lambda_{j_i} } \wedge
\ldots \wedge d\lambda_{j_m}, \quad \text{if } J =  \{j_0, j_1, \ldots ,j_m\}.
\]
We note that if $J = \{ j\} \in \Gamma_0$, then
$\phi_J = \lambda_J = \lambda_j$. Furthermore, the set
$\{ \phi_J \}_{J \in \Gamma_k}$ is a basis for
$\P_1^-\Lambda^k(\S_n)$.  We will also use the notation
\begin{equation*}
(\delta \phi)_J = \sum_{i =0}^m (-1)^i \phi_{J(\hat i)}, \qquad
J =  \{j_0, j_1, \ldots ,j_m\},
\end{equation*}
where $J(\hat i)$ refers to the set $J$, but with the index $j_i$ removed.

\subsection{Scalar-valued functions}\label{sec:zero-forms}
We next give a quick review of the extensions studied in
\cite{Mansfield2} for scalar-valued functions, or zero forms, but in a
slightly different notation.  For each index set $I \in \Gamma_m$,
$m \ge 1$, and $j \in \{0, \ldots ,n\}$, we define the map
$P_{I,j}: \S_n \to \partial \S_n$ by
\[
P_{I,j} x = x + \sum_{i \in I} \lambda_i(x) (x_j - x_i).
\]
In fact, $P_{I,j}$ is a projection onto the set
$\{ x \in \S_n : \lambda_i(x) = 0, \, i \in I, \, i \neq j \}$.  Note
that if $j \in I$, then $P_{I,j} = P_{I^\prime,j}$, where $I^\prime$
represents the index set obtained from $I$ by removing $j$. 
In Figure~\ref{projections}, we show all the possible points
$P_{I,j}x$, for $j \in I$, in the case when $n=2$.

\begin{figure}
\begin{center}
\begin{tikzpicture}[scale = 0.75]
\draw[-](0,0)--(5,0);
\draw[-](0,0)--(2.5,4);
\draw[-](5,0)--(2.5,4);
\draw[dashed](0.6,1)--(4.4,1);
\draw[dashed](1.4,0)--(3.1,3);
\draw[dashed](1.25,2)--(2.5,0);
%
%
\node[inner sep = 0pt,minimum size=6pt,fill=black!100,circle] (n2) at (5,0) {};
\node[inner sep = 0pt,minimum size=6pt,fill=black!100,circle] (n2) at (0,0) {};
\node[inner sep = 0pt,minimum size=6pt,fill=black!100,circle] (n2) at (2.5,4){};
%
%
%
\node at (-1.5,0) {$P_{[0,1,2],2} = x_2$};
\node at (6.6,0) {$x_0= P_{[0,1,2],0}$};
\node at (2.5,4.5) {$x_1= P_{[0,1,2],1}$};
\node at (1.9,1.5) {$x$};
\draw[dashed](0.6,1)--(4.4,1);
\draw[dashed](1.4,0)--(3.1,3);
\draw[dashed](1.25,2)--(2.5,0);

\node at (-0.5,1) {$P_{[0,2],2}x$};
\node at (5.5,1) {$P_{[0,2],0}x$};
\node at (0.25,2.2) {$P_{[0,1],1}x$};
\node at (4,3) {$P_{[1,2],1}x$};
\node at (1.2,-0.35) {$P_{[1,2],2}x$};
\node at (3.0,-0.35) {$P_{[0,1],0}x$};
\node[inner sep = 0pt,minimum size=6pt,fill=black!100,circle] (n2) at (1.95,1)
{};
\end{tikzpicture}
\caption{\label{projections} Projections $P_{I,j}x$ for $n=2$.}
\end{center}
\end{figure}
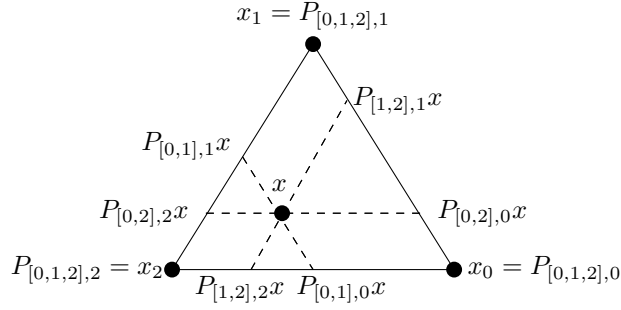

To define the extension
$E_n^0 : \Lambda^0(\partial \S_n) \to \Lambda^0(\S_n)$, we will utilize
the corresponding pullbacks $P_{I,j}^*$, defined by 
$(P_{I,j}^*u)_x = u(P_{I,j}x)$.  We define
\begin{equation*}
E_n^0u  = \frac{1}{n} \sum_{\substack{ I \in \Gamma_m \\ 1\le m \le n}} (-1)^{m+1} 
\sum_{j \in I}  \frac{\lambda_j}{\lambda_I} P_{I,j}^* u.
\end{equation*}
To see that $E_n^0$ is an extension, we need to check that
$\tr_{\partial \S_n} E_n^0 u = u$.  Without loss of generality,
consider a point $x \in \partial \S_n$ such that $\lambda_0(x) = 0$.
Then we have that
\[
 \frac{1}{n} \sum_{\substack{ I \in \Gamma_1 \\ 0 \in I}} 
\sum_{j \in I}  (\frac{\lambda_j}{\lambda_I} P_{I,j}^* u)_x = u_x.
\]
In addition, if $I \in \Gamma$ such that $0 \notin I$, and
$I^{\prime} = \{0,I\}$ then
\[
\Big(\frac{\lambda_j}{\lambda_I} P_{I,j}^* u\Big)_x 
= \Big(\frac{\lambda_j}{\lambda_{I^\prime}} P_{I^\prime,j}^* u\Big)_x.
\]
Therefore, the terms corresponding to $I$ and $I^\prime$ cancel at the
point $x$ and we can conclude that $(E_n^0 u)_x = u_x$. This shows
that {\it $E_n^0$ is an extension.}

Because of the rational factors in the definition of $E_n^0$, it is
not entirely obvious that the operator preserves the polynomial
structure of $u$. However, we observe that the pullbacks $P_{I,j}^*$
have the property that if
$u \in \P_r\Lambda^0(\partial \S_n)$, then
$P_{I,j}^*u \in \P_r\Lambda^0(\S_n)$. To show that the same property
holds for the extension $E_n^0$, we fix an $I \in \Gamma$ and consider
the sum $\sum_{j \in I} (\lambda_j/\lambda_I) P_{I,j}^* u$. If $I$ is
the maximal set $I = \{0, \ldots ,n\}$, then $\lambda_I(x) \equiv 1$ on
$\S_n$, so the sum is is simply $\sum_{j=0} ^n \lambda_j(x) u_{x_j}$,
which is the linear function that interpolates $u$ at the vertices.
For any other $I$, there is at least a vertex $x_p$ such
that $p \notin I$, and
\[
\sum_{j \in I} (\lambda_j/\lambda_I) 
P_{I,j}^* u = \sum_{j \in I} (\lambda_j/\lambda_I) 
(P_{I,j}^* u \pm P_{I,p}^* u) = P_{I,p}^* u + \sum_{j \in I} (\lambda_j/\lambda_I) 
(P_{I,j}^* u - P_{I,p}^* u).
\]
The first term on the right hand side is polynomial preserving, while we have 
\begin{multline}
\label{Pudiff}
P_{I,j}^* u_x -  P_{I,p}^*u_x = \int_0^1 \frac{d}{d\tau}
u_{(1-\tau)P_{I,p} x + \tau P_{I,j} x} \, d\tau \\
= \lambda_I \int_0^1 (d u)_{(1-\tau)P_{I,p} x + \tau P_{I,j} x}
 \lrcorner  (x_j - x_p) \,d\tau,
\end{multline}
which has $\lambda_I$ as a factor. Furthermore, the curve
$(1-\tau)P_{I,p} x + \tau P_{I,j} x$ for $\tau \in [0,1]$ belongs to
the set
$\{ x \in \S_n \, : \, \lambda_i(x) = 0, \, i \in I(\hat j) \}
\subset \partial \S_n$.
Therefore, we can conclude that
$ E_n^0(\P_r\Lambda^0(\partial \S_n)) \subset \P_r\Lambda^0(\S_n)$.

\subsection{The main result}\label{sec:main}
The discussion above shows that the operator
$E_n^0 : \Lambda^0(\partial \S_n)$ $\to \Lambda^0(\S_n)$ is a polynomial
preserving extension operator.  The main result of this paper is to
construct corresponding operators
$E_n^k :\Lambda^k(\partial \S_n) \to \Lambda^k(\S_n)$, $1\le k < n$,
satisfying $\tr_{\partial \S_n} E_n^k u = u$ and 
which are cochain extensions in the sense that the diagram
\begin{equation*}
\begin{CD}
\Lambda^0(\partial \S_n) @>d>> \Lambda^1(\partial \S_n)
@>d>> \ldots @>d>> \Lambda^{n-1}(\partial \S_n)
\\
@VV E_n^0 V @VV E_n^1 V @. @VV E_n^{n-1} V 
\\
\Lambda^0(\S_n) @>d>> \Lambda^1(\S_n)@>d>> 
\ldots @>d>>\Lambda^{n-1}(\S_n),
\end{CD}
\end{equation*}
commutes. In other words, $dE_n^{k-1} = E_n^k d$ for $1 \le k < n$,
and we will also show that $d E_n^{n-1} = 0$. Furthermore, the
extensions $E_n^k$ are polynomial preserving in the sense that
\begin{equation*}
E_n^k(\P_r\Lambda^k(\partial \S_n)) \subset \P_r\Lambda^k(\S_n) \quad \text{and }
E_n^k(\P_r^-\Lambda^k(\partial \S_n)) \subset \P_r^-\Lambda^k(\S_n), 
\end{equation*}
for $0 \le k < n$ and $r \ge 1$. 

All the operators $E_n^k$, $0 \le k <n,$ that we construct
will be rational functions of the form
\begin{equation}
\label{def-Tk}
E_n^ku  = \frac{1}{n} \sum_{\substack{I \in \Gamma_m\\ 1 \le m\le n}} (-1)^{m+1} 
\sum_{\substack{J \in \Gamma_s(I) \\ 0 \le s \le k}}
\frac{\phi_J}{\lambda_I^{s+1}} \wedge A^k_{I,J} u, 
\end{equation}
where the coefficient operators $A_{I,J}^k$ map
$\Lambda^k(\partial \S_n)$ to $\Lambda^{k-s}(\S_n)$ for
$J \in \Gamma_s(I)$, and such that $A_{I,J}^k = P_{I,j}^*$ for
$J = \{ j \} \in \Gamma_0$.
However, in contrast to what was done in the series of papers
\cite{DGS-extension3, DGS-extension1, DGS-extension2}, we will not
perform a careful discussion of bounds for the corresponding operator
norms.  It is clear that the extensions constructed by blending will
not be well defined on spaces with weak regularity, such as
$L^2(\partial \S_n)$. On the other hand, a lesson to be learned from
the series mentioned above, see also \cite{bubble-I, bubble-II}, is
that some additional averaging technique has to be used to be able
to construct such weak regularity extensions.

\subsection{Polynomial complexes}
\label{sec:poly-complex}
As an example of a direct application of the extensions $E_n^k$
constructed below, we will briefly consider polynomial complexes of
differential forms on $\R^n$, cf. \cite[Section 3.5]{acta} and
\cite[Section 4.2]{cost-mc}. For any $r>0$, the full polynomial
complex
\[                                                                              
\begin{CD}                                                                      
\R\hookrightarrow \P_r\Lambda^0 @>\D>> \P_{r-1}\Lambda^{1}                      
 @>\D>> \cdots @>\D>> \P_{r-n}\Lambda^n \to 0,                                  
\end{CD}                                                                        
\]
and the trimmed polynomial complex
\[                                                                              
\begin{CD}                                                                      
\R\hookrightarrow \P_r^-\Lambda^0 @>\D>> \P_r^-\Lambda^{1}                      
 @>\D>> \cdots @>\D>> \P_r^-\Lambda^n \to 0,                                    
\end{CD}                                                                        
\]
are both exact. In fact, by combining the full polynomial spaces and
the trimmed polynomial spaces, we can obtain $2^{n-1}$ different
exact polynomial complexes of the form
\begin{equation}\label{pol-exact}                                               
\begin{CD}                                                                      
\R\hookrightarrow \P\Lambda^0 @>\D>> \P\Lambda^{1}                              
 @>\D>> \cdots @>\D>> \P\Lambda^n \to 0,                                        
\end{CD}                                                                        
\end{equation}
where $\P\Lambda^0= \P_r\Lambda^0 = \P_r^-\Lambda^0$ and
$\P_{r-k}\Lambda^k \subset \P\Lambda^k \subset \P_r^-\Lambda^k$. In
particular, each space $\P\Lambda^k$ is either a trimmed or a full
polynomial space.  Furthermore, the exactness of these complexes can
be established by a standard Poincar\'e-type operator of the form
\[                                                                              
(Q^k u)_x = \int_0^1 \tau^{k-1} u_{(1- \tau)a + \tau x} \lrcorner
(x-a) \, d\tau,
\]
where $a \in \R^n$ is fixed. More precisely, if the spaces
$\P\Lambda^{k-1}$ and $\P\Lambda^{k}$ are related as in
\eqref{pol-exact}, then the operator $Q^k$ maps $\P\Lambda^{k}$ to
$\P\Lambda^{k-1}$, and if $u \in \P\Lambda^{k}$ satisfies $du = 0$,
then $u = dQ^ku$. In fact, any element $u$ of the polynomial spaces
$\P\Lambda^k$ admits the representation
\[                                                                              
u = dQ^ku + Q^{k+1} du.                                                         
\]
It also well-known that the corresponding polynomial complexes with
boundary conditions are exact. More precisely, we consider complexes
of the form
\[                                                                              
\begin{CD}                                                                      
\0\P\Lambda^0(\S_n) @>\D>> \0\P\Lambda^{1}(\S_n)                                
 @>\D>> \cdots @>\D>> \0\P\Lambda^n(\S_n) \to \R,                               
\end{CD}                                                                        
\]
where the spaces $\0\P\Lambda^{k}(\S_n)$ are the restriction of the
spaces $\P\Lambda^{k}$ of \eqref{pol-exact} to $\S_n$, and with
vanishing traces on $\partial S_n$.  The exactness can in this case be
established by counting degrees of freedom, using the results of
\cite[Section 4]{acta} or \cite[Section 5]{bulletin}.  On the other
hand, in this setting it is not straightforward to construct a simple
operator $\0 Q^k$ which maps $\0\P\Lambda^{k}(\S_n)$ to
$\0\P\Lambda^{k-1}(\S_n)$, and which has the property that
$u = d\0 Q^ku$ if $du=0$. For example, if we consider an operator of
the form $Q^k$ above, then it seems impossible to choose the point
$a \in \S_n$ such that the vanishing trace condition is preserved.
However, by using the extension operator $E_n^k$ constructed in this
paper, we can define $\0 Q^k$ as
\[                                                                              
\0 Q^k u = (I - E_n^{k-1} \circ \tr_{\partial S}) Q^k u,                        
\]
where the operator $Q^k$ is a Poincar\'e-type operator of the form
above, with $a \in \S_n$.  It follows from the properties of
$E_n^{k-1}$ and $Q^k$ that the operator $\0 Q^k$ maps
$\0\P_r\Lambda^k(\S_n)$ to $\0\P_{r+1}^-\Lambda^{k-1}(\S_n)$ and
$\0\P_r^-\Lambda^k(\S_n)$ to $\0\P_{r}\Lambda^{k-1}(\S_n)$.
Furthermore, we have
\[                                                                              
d \0 Q^ku +  \0 Q^{k+1} du = u - E_n^k\circ \tr_{\partial \S_n} u = u, \quad u 
\in \0\Lambda^k(\S_n).                                                          
\]

\section{The case of one forms}
\label{sec:one-forms}
To motivate the general construction for $k$ forms, we first consider
the construction for one forms. The operator $E_n^1$ must satisfy 
the commuting
property, $E_n^1 du = d E_n^0 u$. The right hand side of this identity is known,
and given by 
\begin{equation*}
d E_n^0 u  = \frac{1}{n} \sum_{\substack{I \in \Gamma_m\\ 1 \le  m \le n}} (-1)^{m+1} 
\sum_{j \in I} \Big[ d \Big(\frac{\lambda_j}{\lambda_I}\Big) P_{I,j}^* u
+ \frac{\lambda_j}{\lambda_I}  P_{I,j}^* du \Big],
\end{equation*}
where we have used the fact that $P_{I,j}^*$ commutes with the
exterior derivative.  The goal is to write the complete right hand
side in terms of $du$.  For a fixed $j \in I$, we have
\begin{equation*}
d \Big(\frac{\lambda_j}{\lambda_I}\Big)
= \frac{1}{\lambda_I^2} [\lambda_I d \lambda_j - \lambda_j d \lambda_I]
=\frac{1}{\lambda_I^2} \sum_{i \in I, i \neq j} 
[\lambda_i d \lambda_j - \lambda_j d \lambda_i]
= \sum_{i \in I, i \neq j} \frac{\phi_{i,j}}{\lambda_I^2}.
\end{equation*}
Since $\phi_{j,i} = - \phi_{i,j}$, we then obtain
\begin{equation*}
\sum_{j \in I}  d \Big(\frac{\lambda_j}{\lambda_I}\Big) P_{I,j}^* u
= \sum_{J \in \Gamma_1(I)}
\frac{\phi_J}{\lambda_I^2} (\delta P_I^* u)_{J},
\end{equation*}
where $(\delta P_I^* u)_{J} = P_{I,j}^*u - P_{I,i}^*u$ if $J = \{i,j \}$. 
Furthermore, as in \eqref{Pudiff}, we obtain 
\[
((\delta P_I^* u)_{J})_x = \lambda_I(x) \int_0^1 
(du)_{(1-\tau)P_{I,i} x + \tau P_{I,j} x} \lrcorner (x_j - x_i) \, d\tau, \quad 
J = \{ i,j \}.
\]
Since the above formula depends on $du$, rather than $u$, this leads
to a possible definition of $E_n^1 u$ such that the desired commuting
relation holds. More precisely, we can define
\begin{equation*}
E_n^1u = \frac{1}{n} \sum_{\substack{I \in \Gamma_m\\ 1 \le
m\le n}} (-1)^{m+1} \Big[ \sum_{j \in I}
\frac{\lambda_j}{\lambda_I} P_{I,j}^* u + \sum_{J \in \Gamma_1(I)}
\frac{\phi_J}{\lambda_I^2} R_{I,J}^1 u
\Big],
\end{equation*}
where the operator $R_{I,J}^1$ is defined by 
\[
(R_{I,J}^1 u )_x = \lambda_I(x) \int_0^1 u_{(1-\tau)P_{I,i} x + \tau P_{I,j} x} 
\lrcorner (x_j - x_i) \, d\tau
\]
for any $I \in \Gamma$ and $J$ an 
index set of length $2$.  Alternatively, we have
\[
(R_{I,J}^1 u )_x = \int_{P_{I,i} x}^{P_{I,j} x} u, \quad J = \{i,j \},
\]
where we have used differential form notation for writing the
integral of a one-form $u$ over the one-dimensional space 
$[P_{I,i}x, P_{I,j}x]$.
The problem with the definition of $E_n^1$ above is that the line
$[P_{I,i}x, P_{I,j}x]$ will in general not belong to the boundary
$\partial \S_n$, cf. Figure~\ref{projections}. We will therefore
replace the operator $R_{I,J}^1$ by an alternative operator,
$A_{I,J}^1$, given by
\begin{equation}
\label{A1Jij}
A_{I,J}^1 u= \frac{1}{n-1} \sum_{p \notin J} [R_{I,\{p,j\}}^1u - R_{I,\{p,i\}}^1u], \quad J = \{ i, j\} .
\end{equation}
This operator will still satisfy the key relation
$A_{I,J}^1 du = (\delta P_I^*u)_{J}$, and the line from
$[P_{I,j}x, P_{I,p}x]$ will belong to the set $\lambda_i = 0$ if
$J = \{i,j \}$ and $p \notin J$.  As a consequence, the operator
$A_{I,J}^1$ maps one forms defined on $\partial \S_n$ to scalar functions
defined on $\S_n$, and the operator $E_n^1$, defined by
\begin{equation}
\label{E1-formula} E_n^1u = \frac{1}{n} \sum_{\substack{I \in \Gamma_m\\ 1 \le
m \le n}} (-1)^{m+1} \Big[ \sum_{j \in I}
\frac{\lambda_j}{\lambda_I} P_{I,j}^* u + \sum_{J \in \Gamma_1(I)}
\frac{\phi_J}{\lambda_I^2} A_{I,J}^1 u\Big],
\end{equation}
will satisfy the commuting relation $dE_n^0u = E_n^1 du$.

To see that the operator $E_n^1$, defined by \eqref{E1-formula}, is an
extension, we have to show that $\tr_{\partial \S_n} E_n^1 u = u$.
This follows by essentially the same argument as for zero forms,
(cf. also the proof of Theorem~\ref{thm:extension} below).
As in the case of zero forms, we will have 
\[
 \tr_{\lambda_0 = 0}\frac{1}{n} \sum_{\substack{ I \in \Gamma_m \\ 1 \le m \le n}} 
(-1)^{m+1} \sum_{j \in I}  \frac{\lambda_j}{\lambda_I} P_{I,j}^* u 
=  \tr_{\lambda_0 = 0} u.
\]

To complete the argument, it will therefore be enough to show that
\begin{equation}\label{trace-prop-1}
 \tr_{\lambda_0 = 0} \Big[ \sum_{\substack{I \in \Gamma_m\\ 1 \le
m \le n}} (-1)^{m+1} \sum_{J \in \Gamma_1(I)}
\frac{\phi_J}{\lambda_I^2} A_{I,J}^1 u \Big] = 0.
\end{equation}
If $0 \in J$, then $\tr_{\lambda_0 = 0} \phi_J =0$. On the other hand,
if $0 \notin J$, then we can conclude from \eqref{A1Jij}
that for any $I \in \Gamma$ such that $0 \notin I$ and $J \in \Gamma_1(I)$,
\[
\tr_{\lambda_0 = 0} (A_{I,J}^1 u - A_{I^\prime,J}^1 u) =0, \quad I^\prime = \{ 0, I \}.
\]
Therefore, the terms corresponding to $I$ and $I^\prime$ on the left
hand side of \eqref{trace-prop-1} cancel. We can therefore conclude
that the identity \eqref{trace-prop-1} holds, and as a consequence,
$E_n^1$ is an extension.

The fact that the operator $E_n^1$ maps piecewise smooth one-forms defined
on $\partial \S_n$ to smooth one-forms defined on $\S_n$ and also preserves
the polynomial structure is not at all obvious, since the operator
$R_{I,J}^k$ and hence the operator $A_{I,J}^k$ has one factor of
$\lambda_I$, but not two.  However, we will show in Section~\ref{sec:Tnk},
using an alternative representation of the
operator $E_n^1$, that $E_n^1$ does, in fact, have these properties.

\section{The operators $R_{I,J}^k$ and their properties}
\label{sec:Rop}
To develop a formula for $E_n^k$ of the form \eqref{def-Tk}, in the
general case, we will first consider how to generalize the operators
$R_{I,J}^k$, introduced above, to the case when $k >1$ and $J$ is a
more general index set.  We recall that when $J$ is an index set, the
set $[x_J]$ is the simplex generated by the vertices
$\{ x_j \}_{j \in J}$. For any $x \in \S_n$ and index set
$I \in \Gamma$, the corresponding simplex $[P_{I,J}x]$ is the convex
combinations of the points $\{ P_{I,j}x \}_{j \in J}$. In general, the
simplex $[P_{I,J}x]$ will not be a subset of $\partial \S_n$,
cf. Figure~\ref{projections}.  However, if $I \cap J^c$ is nonempty,
where we recall that $J^c$ is the complement of $J$, we will indeed
have $[P_{I,J}x] \subset \partial \S_n$, since any $\lambda_i$, with
$i \in I \cap J^c$, will be identically zero on $[P_{I,J}x]$.

Key tools for the construction of the operators $R_{I,J}^k$ are the
maps $F_I : \S_n \times \partial S_n \to \S_n$ given by
\[
F_I(x,y) = x + \sum_{i \in I}\lambda_i(x) (y- x_i),
\]
where $I \in \Gamma$. We observe that $F_I(x,x_j) =
P_{I,j}x$.
Furthermore, if we restrict the domain of the map $F_I$ to
$\S_n \times [x_J]$, where $I \cap J^c$ is nonempty, then the range is
a subset of $\partial \S_n$. The pullback $F_{I}^*$, is a map
\[
F_{I}^* : \Lambda^k(\S_n) \to \Lambda^k(\S_n \times \partial \S_n).
\]
In the discussion below, we will be interested in operators of the form
$\tr_{\S_n \times [x_J]} \circ F_I^*$, mapping $\Lambda^k(\S_n)$ to
$\Lambda^k(\S_n \times [x_J])$. In particular, it follows from the
discussion above that if $I \cap J^c$ is nonempty, then this operator
maps $\Lambda^k(\partial \S_n)$ to $\Lambda^k(\S_n \times [x_J])$.

A space of $k$--forms on
a product space can be expressed using the tensor product  $\otimes$ as
\[
\Lambda^k(\S_n \times \partial \S_n) 
= \sum_{s=0}^k \Lambda^{k-s}(\S_n) \otimes \Lambda^{s}(\partial \S_n).
\]
In other words, elements 
$U \in \Lambda^{k-s}(\S_n) \otimes \Lambda^{s}(\partial \S_n)$ 
can be written as a sum of terms of the form  
\[
a(x,y) dx^{k-s} \otimes dy^{s},
\]
where $dx^{k-s}$ and $dy^{s}$ run over bases in $\Alt^{k-s}(\S_n)$ and
$\Alt^{s}(\partial \S_n)$, respectively, and where $a$ is a scalar
function on $\S_n \times \partial \S_n$.  
Here $\Alt^k$ is the corresponding space of algebraic $k$ forms.
Furthermore, for each $s$,
$0 \le s \le k$, there is a canonical map
$\Pi_s : \Lambda^k(\S_n \times \partial \S_n) \to \Lambda^{k-s}(\S_n)
\otimes \Lambda^{s}( \partial \S_n)$ such that
\[
U = \sum_{s=0}^k \Pi_s U, \quad U \in \Lambda^k(\S_n \times  \partial \S_n).
\]
The functions $\Pi_s F_{I}^* u \in \Lambda^{k-s}(\S_n) \otimes
\Lambda^{s}(\partial \S_n)$ can be identified as
\begin{multline*}
(\Pi_s F_{I}^* u)_{x,y} (v_1, \ldots, v_{k-s},  t_1, \ldots, t_{s})
\\
= u_{F_{I}(x,y)}(D_{x} F_{I} v_1,, \ldots, D_{x} F_{I} v_{k-s}, 
D_y F_{I} t_1, \ldots, D_{y} F_{I} t_{s}),
\end{multline*}
where the tangent vectors $v_i \in T(\S_n)$ and
$t_i \in T_y(\partial \S_n)$.  For the special function $F_I$ in our
case, we have $D_{y} F_{I} = \lambda_I(x) I$, while
\[
D_{x}F_{I} = D_xx + \sum_{i \in I}(y- x_i)d_x\lambda_i = \sum_{\ell \in I^c} (x_\ell -y) d_x \lambda_\ell,
\]
where $D_xx$ is the identity matrix.

The basic commuting property for pull-backs, namely $d F^* = F^* d$, can be
expressed in the present setting as
\begin{equation}\label{d-F*}
\Pi_s F_{I}^*du = \Pi_s dF_{I}^* u = d_{\S} \Pi_s F_{I,J}^*u 
-  (-1)^{k-s} d_{\partial \S}\Pi_{s-1} F_{I}^*u, \quad u \in \Lambda^k(\S_n),
\end{equation}
where $0 \le s \le k+1$, and where $d_{\S}$ and $d_{\partial \S}$
denote the exterior derivative with respect to the spaces $\S_n$ and
$\partial \S_n$, respectively.

Let $I \in \Gamma$ and $J$ an index set with $|J| = s+1$,
$0 \le s \le k \le n$.  We introduce a family of
operators $R_{I,J}^k$, mapping $\Lambda^k(\S_n)$ to
$\Lambda^{k-s}(\S_n)$, defined by
\begin{equation}
\label{Rdef}
(R_{I,J}^k u)_x = \int_{[x_J]} (\Pi_s F_{I}^* u)_x.
\end{equation}
For $s >k$, we define $R_{I,J}^k$ to be zero.
If $v_1, \ldots , v_{k-s}$ are vectors in 
$\R^{n+1}$ and $t_1, \ldots, t_s$ is  any orthonormal basis for the
tangent space $T[x_J] = T[P_{I,J} x]$, then
\begin{multline*}
(R_{I,J}^k u)_x(v_1, \ldots, v_{k-s})
=  \lambda_I(x)^s \int_{[x_J]} u_{F_{I}(x,y)} \lrcorner D_x F_{I} v_1 \lrcorner
\ldots \lrcorner D_x F_{I} v_{k-s} 
\\
= \lambda_I(x)^s \int_{[x_J]} (u_{F_{I}(x,y)} \lrcorner D_x F_{I} v_1 \lrcorner
\ldots \lrcorner D_x F_{I} v_{k-s}) ( t_1, \ldots 
, t_s )\, dy.
\end{multline*}
Note that since $u$ is a $k$-form, $u_{F_{I}(x,\cdot)} \lrcorner D_x F_{I} v_1 
\lrcorner \ldots \lrcorner D_x F_{I} v_{k-s}$
is an $s$ form with respect to $y$,
which we can then integrate over the $s$ dimensional space $[x_J]$.
In the final formula, we see that the integral to be evaluated is an
integral over $[x_J]$ for a fixed  $x$ and vectors $v_1, \ldots, v_{k-s}$.

In the special case when $s=0$, i.e., when the simplex $[x_J]$
degenerates to a vertex $x_j$, the operator $R_{I,J}^k$ is interpreted
as $P_{I,j}^*$.  When $s=1$ and $J = \{i,j \}$, we can
utilize the parameterization
$y = (1- \tau)x_i + \tau x_j, \, \tau \in [0,1]$ of $[x_J]$ to verify
that the definition of $R_{I,J}^1$ given in Section~\ref{sec:one-forms}
corresponds exactly to the definition given by \eqref{Rdef}.
If $I$ and $J$ are related such that $I \cap J^c$ is 
nonempty, then $R_{I,J}^k u$ will only depend on $\tr_{\partial \S_n} u$. 
Furthermore, if $i \in I \cap J^c$, then all the vectors 
$D_xF_I v_1, \ldots ,D_xF_I v_{k-s}$ and $t_1, \ldots, t_s$ will belong 
to the tangent space of the boundary simplex 
$\{ x \in \S_n \, : \, \lambda_i(x) = 0\}$, a space of dimension $< n$.
Hence, for $k=n$, it follows that all operators of the form
$R_{I,J}^n$ are identically zero in this case.

We will summarize the key properties of the operator $R_{I,J}^k$ in
the three lemmas given below.
\begin{lem}
\label{lem:Rtrace}
Let $I \in \Gamma$ and assume that $j \in \{0, \ldots ,n\}$ is such
that $j \notin I$. Then for all index sets $J$,
\begin{equation*}
\tr_{\lambda_j =0} R_{I,J}^k u = \tr_{\lambda_j =0} R_{I^\prime,J}^k u,
\end{equation*}
where $I^\prime \in \Gamma$ is equal to $\{j,I\}$ up to a
reordering. Moreover, if $I \cap J^c$ is nonempty, then $ R_{I,J}^ku$
only depends on $\tr_{\partial \S} u$, and the operators $R_{I,J}^n$
are all identically zero.
\end{lem}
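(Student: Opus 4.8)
The plan is to obtain all three assertions directly from the pointwise description of $R_{I,J}^k$ in \eqref{Rdef} and the explicit formula following it, using only two elementary facts about $F_I$: first, since each $\lambda_i$ is affine with $\lambda_i(x_\ell) = \delta_{i\ell}$, one has $\lambda_i(F_I(x,y)) = \lambda_I(x)\,\lambda_i(y)$ whenever $i \in I$; second, $D_xF_I = \sum_{\ell \in I^c}(x_\ell - y)\,d_x\lambda_\ell$, as recorded above, while $D_yF_I = \lambda_I(x)\,\mathrm{Id}$.

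For the trace identity, set $I' = \{j,I\}$, so that $(I')^c = I^c\setminus\{j\}$, $\lambda_{I'} = \lambda_I + \lambda_j$, and $F_{I'}(x,y) = F_I(x,y) + \lambda_j(x)\,(y - x_j)$. Now evaluate a form at a point $x$ with $\lambda_j(x) = 0$ on tangent vectors $v_1,\dots,v_{k-s}$ of the face $\{\lambda_j = 0\}$, i.e.\ on vectors with $d_x\lambda_j(v_i) = 0$; this is exactly the data on which $\tr_{\lambda_j = 0}$ acts. At such $x$ we get $\lambda_{I'}(x) = \lambda_I(x)$ and $F_{I'}(x,y) = F_I(x,y)$ for every $y \in [x_J]$, while $D_xF_{I'} - D_xF_I = (y - x_j)\,d_x\lambda_j$ annihilates each $v_i$, so $D_xF_{I'}v_i = D_xF_Iv_i$. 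Since the orthonormal frame $t_1,\dots,t_s$ of $T[x_J]$ does not depend on $I$, both the prefactor $\lambda_I(x)^s$ and the $y$-integrand in the explicit formula for $(R_{I,J}^ku)_x(v_1,\dots,v_{k-s})$ coincide with those for $(R_{I',J}^ku)_x(v_1,\dots,v_{k-s})$, and integrating over $[x_J]$ gives $\tr_{\lambda_j=0}R_{I,J}^ku = \tr_{\lambda_j=0}R_{I',J}^ku$. This argument covers $s = 0$ verbatim, with $[x_J]$ a single vertex and $R_{I,J}^k = P_{I,j}^*$ — it is the cancellation already used for zero forms — and for $s > k$ both sides are zero by definition.

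For the remaining assertions, suppose $i \in I \cap J^c$. Since $i \notin J$, $\lambda_i$ vanishes on $[x_J]$, so the first fact above gives $\lambda_i(F_I(x,y)) = \lambda_I(x)\,\lambda_i(y) = 0$ for all $x \in \S_n$ and $y \in [x_J]$; hence $F_I$ maps $\S_n \times [x_J]$ into the facet $\{\lambda_i = 0\} \subset \partial\S_n$, and therefore $D_xF_I$ and $D_yF_I$ carry tangent vectors into $T\{\lambda_i = 0\}$. Consequently, in the formula for $\Pi_sF_I^*u$ the form $u$ is sampled only at points of $\partial\S_n$ and only on vectors tangent to $\partial\S_n$, so $\Pi_sF_I^*u$, and with it $R_{I,J}^ku = \int_{[x_J]}(\Pi_sF_I^*u)_x$, is unchanged when $u$ is replaced by any form with the same trace on $\partial\S_n$; this is the second assertion. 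Finally, when $k = n$ the $n$ vectors fed into $u_{F_I(x,y)}$ in \eqref{Rdef} — the $n-s$ vectors $D_xF_Iv_\ell$ together with the $s$ vectors $D_yF_It_\ell = \lambda_I(x)\,t_\ell$, all of which lie in the $(n-1)$-dimensional space $T\{\lambda_i=0\}$ — are linearly dependent, so the alternating $n$-covector $u_{F_I(x,y)}$ annihilates them; the integrand in \eqref{Rdef} vanishes identically and $R_{I,J}^nu = 0$.

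The argument is short, and the one place that repays attention is the trace identity: one must check that restricting to $\{\lambda_j = 0\}$ and to vectors tangent there is precisely what is needed — neither too little nor too much — to force the three $I$-versus-$I'$ ingredients $\lambda_I/\lambda_{I'}$, $F_I/F_{I'}$, and $D_xF_I/D_xF_{I'}$ to coincide, and that the degenerate case $s = 0$ falls under the same computation rather than needing the separate zero-form argument.
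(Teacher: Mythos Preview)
Your proof is correct and follows the same line as the paper's: the key observation for the trace identity is that $F_I(x,y) = F_{I'}(x,y)$ whenever $\lambda_j(x)=0$, which the paper invokes via the functoriality of pullbacks ($\tr_{\lambda_j=0} F_I(\cdot,y)^*u = \tr_{\lambda_j=0} F_{I'}(\cdot,y)^*u$) while you unpack it explicitly into the matching of $\lambda_I$, $F_I$, and $D_xF_I$ on face-tangent vectors. The remaining two assertions are likewise handled by the same mechanism as in the discussion preceding the lemma in the paper, only spelled out in more detail.
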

\begin{proof}
  The properties obtained when $I \cap J^c$ is nonempty are already
  observed above.  Furthermore, we have for all $y \in \partial \S_n$
\[
F_I(x,y) \equiv  F_{I^\prime}(x,y), \quad 
\{ x  \in \S_n \, : \, \lambda_j(x) =0\}.
\]
As a consequence,
\[
\tr_{\lambda_j =0} F_I(\cdot ,y)^*u =  \tr_{\lambda_j =0} F_{I\prime}(\cdot ,y)^*u, 
\quad   y \in \partial \S_n.
\]
The desired result follows directly from the definition of
the operators $R_{I,J}^k$.
%
\end{proof}

It follows directly from the definition of the operators $R_{I,J}^k$
and the smoothness of the functions $F_I(\cdot,y)$ on $\S_n$ for any
fixed $y \in \partial \S_n$, that the operator
$\lambda_I^{-s} R_{I,J}^k$ maps $\Lambda^k(\S_n)$ to
$\Lambda^{k-s}(\S_n)$.  The corresponding result in the polynomial
case is given below.

\begin{lem}
\label{lem:Rprop1}
Let $I \in \Gamma$ and $J$ an index set with $|J|= s+1$.
\begin{itemize}
\item[i) ]If $u \in \P_r\Lambda^k(\S_n)$, then $\lambda_I^{-s}R_{I,J}^k u 
\in \P_r \Lambda^{k-s}(\S_n)$; 
\item[ii)] If $u \in \P_r^-\Lambda^k(\S_n)$ then 
$\lambda_I^{-s} R_{I,J}^k u \in \P_{r}^-\Lambda^{k-s}(\S_n)$.
\end{itemize}
Furthermore, if $I \cap J^c$ is nonempty, then the assumptions in the
two cases can be reduced to the trace conditions
$u \in \P_r\Lambda^k(\partial \S_n)$ and
$u \in \P_r^-\Lambda^k(\partial \S_n)$, respectively.
\end{lem}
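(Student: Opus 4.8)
The plan is to reduce everything to the explicit integral formula for $R_{I,J}^k$ derived above, namely
\[
(R_{I,J}^k u)_x(v_1,\ldots,v_{k-s}) = \lambda_I(x)^s \int_{[x_J]}
\bigl(u_{F_I(x,y)} \lrcorner D_x F_I v_1 \lrcorner \cdots \lrcorner D_x F_I v_{k-s}\bigr)(t_1,\ldots,t_s)\,dy,
\]
so that the factor $\lambda_I^s$ in front cancels the $\lambda_I^{-s}$ prefactor, and one is left to analyze the integral. The key structural observation is that $F_I(x,y)$ is \emph{affine} in $x$ for each fixed $y$ (indeed $F_I(x,y) = x + \sum_{i\in I}\lambda_i(x)(y-x_i)$, and the $\lambda_i$ are affine), and $D_x F_I = \sum_{\ell\in I^c}(x_\ell - y)\,d_x\lambda_\ell$ is \emph{constant} in $x$ (a matrix depending only on $y$). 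Therefore, if $u \in \P_r\Lambda^k(\S_n)$, then for fixed $v_1,\ldots,v_{k-s}$ and fixed $y$, the integrand $\bigl(u_{F_I(x,y)} \lrcorner D_x F_I v_1 \lrcorner \cdots\bigr)(t_1,\ldots,t_s)$ is a polynomial in $x$ of degree $\le r$ (a polynomial of degree $\le r$ composed with an affine map in $x$). Integrating over $y \in [x_J]$ — a domain that does \emph{not} depend on $x$ — preserves this, giving part~i).

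For part~ii), I would use the characterization of the trimmed space via the Koszul/contraction operator: $w \in \P_r^-\Lambda^{k-s}(\S_n)$ iff $w \in \P_r\Lambda^{k-s}$ and $w \lrcorner (x-a) \in \P_r\Lambda^{k-s-1}$ for every fixed $a$. Given $u \in \P_r^-\Lambda^k(\S_n)$, part~i) already gives $\lambda_I^{-s}R_{I,J}^k u \in \P_r\Lambda^{k-s}$, so it remains to control its contraction with $x-a$. The natural move is to commute the contraction $\lrcorner (x-a)$ past the integral formula: contracting $R_{I,J}^k u$ against $x-a$ should relate, via the affine change of variables $x \mapsto F_I(x,y)$, to $R_{I,J}^k$ applied to $u \lrcorner (F_I(x,y) - \tilde a)$ for a suitable $\tilde a$ — or more precisely, one expresses $x - a$ in terms of the columns of $D_x F_I$ plus a correction, uses that $u \lrcorner(\cdot) \in \P_r\Lambda^{k-1}$, and applies part~i) again. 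The identity $D_x F_I \cdot (x-a) = F_I(x,y) - F_I(a,y)$ (valid because $F_I(\cdot,y)$ is affine with linear part $D_xF_I$) is the clean way to see this: contracting the innermost slot against $x-a$ produces $u$ evaluated at $F_I(x,y)$ contracted against $F_I(x,y) - F_I(a,y)$, which lies in $\P_r\Lambda^{k-1}(\S_n)$ by the trimmed hypothesis on $u$, and then part~i) closes the argument.

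Finally, the reduction to trace conditions when $I \cap J^c \neq \emptyset$ is immediate from the remark already established above the lemma: in that case $F_I$ maps $\S_n \times [x_J]$ into $\partial\S_n$, so $R_{I,J}^k u$ depends only on $\tr_{\partial\S_n} u$, and hence the hypotheses $u \in \P_r\Lambda^k(\S_n)$ and $u \in \P_r^-\Lambda^k(\S_n)$ may be weakened to $u \in \P_r\Lambda^k(\partial\S_n)$ and $u \in \P_r^-\Lambda^k(\partial\S_n)$ respectively, since by definition these boundary spaces are the traces of the corresponding spaces on $\S_n$ and one may choose a polynomial (resp. trimmed polynomial) extension to run the argument above. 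I expect the main obstacle to be the bookkeeping in part~ii): making precise the commutation of the contraction operator with the pullback-and-integrate construction, in particular keeping track of the orientation/basis vectors $t_1,\ldots,t_s$ of $[x_J]$ and checking that no stray factors of $\lambda_I$ or sign errors appear when the contracted slot interacts with the $D_xF_I$ factors versus the tangential $t$ factors.
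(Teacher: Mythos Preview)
Your plan is correct and follows the same line as the paper's proof: part~i) via the affinity of $F_I(\cdot,y)$ and the $x$-independence of $D_xF_I$, part~ii) by pushing the contraction $\lrcorner(x-a)$ through the integral using the affine identity $D_xF_I\,(x-a)=F_I(x,y)-F_I(a,y)$, and the boundary reduction from the already-noted fact that $F_I(\S_n\times[x_J])\subset\partial\S_n$ when $I\cap J^c\neq\emptyset$. The only cosmetic difference is that the paper fixes $a=x_j$ for a vertex $j\in J$ and computes $F_I(x_j,y)$ explicitly (obtaining $y$ when $j\in I$ and $x_j$ when $j\notin I$, hence a small case split), whereas your affine identity handles all basepoints uniformly and avoids that split; your worry about interaction with the tangential slots $t_1,\ldots,t_s$ does not materialize, since the contraction lands in a $v$-slot and is carried by $D_xF_I$ alone.
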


\begin{proof}
Recall that 
\begin{multline*}
\lambda_I(x)^{-s}(R_{I,J}^k u)_x(v_1, \ldots, v_{k-s})
\\
=\int_{[x_J]} (u_{F_{I}(x,y)} \lrcorner D_x F_{I} v_1 \lrcorner
\ldots \lrcorner D_x F_{I} v_{k-s}) ( t_1, \ldots , t_s )\, dy,
\end{multline*}
where $t_1, \ldots, t_s$ is any orthonormal basis for the tangent
space $T[x_J]$.  Since $F_{I}$ is linear in $x$, the integrand
preserves the polynomial degree of $u$
for each fixed $y \in [x_J]$.
 Since the same will be true for the
integral with respect to $y$, the first part of the lemma follows.

To show the $\P_r^-$ spaces are also preserved, we look at 
$\lambda_I^{-s} R_{I,J}^ku \lrcorner (x - x_j)$, where $j \in J$.  
In fact, we choose $j \in I \cap J$ if this set is  nonempty.
It then follows that 
\[
DF_x(x-x_j) = \sum_{\ell \in I^c}(x_{\ell} -y) \lambda_{\ell}(x )= F_I(x,y) -y,
\]
which gives
\begin{equation}\label{R-contract}
(R_{I,J}^k u \lrcorner (x - x_j))_x = \int_{[x_J]}(\Pi_s F_{I}^* u^\prime)_x,
\end{equation}
where for each fixed $y$, we have
$u_x^\prime = u\lrcorner (x-y)$.  In other words,
$R_{I,J}^k u \lrcorner (x -x_j)$ satisfies the same definition as
$R_{I,J}^k u$, but with $u$ replaced by $u^\prime$.  However, if
$u \in \P_r^-\Lambda^k(\S_n)$, then $u^\prime \in \P_r\Lambda^k(\S_n)$
for each $y$, and hence the same argument as above shows that
$\lambda_I^{-s}R_{I,J}^k u \lrcorner (x -x_j) \in
\P_r\Lambda^k(\S_n)$.  Alternatively, if $I \cap J$ is empty, we obtain
\[
DF_x(x-x_j) = x- x_j + \sum_{i \in I} (y - x_i) \lambda_i(x) = F_I(x,y) - x_j.
\]
Also in this case, we obtain an expression of the form
\eqref{R-contract}, but now with $u^\prime 
= u\lrcorner (x-x_j)$. As
above, we again can conclude that
$\lambda_I^{-s} R_{I,J}^k u \lrcorner (x -x_j) \in
\P_r\Lambda^k(\S_n)$
if $u \in \P_r^-\Lambda^k(\S_n)$.  Hence, the second part of lemma has
been established. The final conclusion, related to the assumption
$I \cap J^c$ nonempty, again follows from the fact that $R_{I,J}^k u$
only depends on $\tr_{\partial \S_n}u$ in this case.
\end{proof}

For  $I \in \Gamma$ and index sets $J$  with $|J| = s+1$, we define 
\[
(\delta R_I^k u)_{J} = \sum_{i = 0}^s (-1)^i R_{I, J(\hat i)}^k.
\]
The following relation will be a key tool to show that the extensions
$E_n^k$ are cochain maps.
\begin{prop}
\label{prop:d-of-Q}
Let $I \in \Gamma$ and $J$ an index set with $|J|= s+1$, where
$0 \le s \le k+1$. The operators $R_{I,J}^k$ satisfy the relations
\begin{equation*}
d(R_{I,J}^ku) = R_{I,J}^{k+1} du + (-1)^{k-s}  (\delta R_I^k u)_{J}, 
\quad u \in \Lambda^k(\S).
\end{equation*}
\end{prop}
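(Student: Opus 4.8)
The plan is to prove the identity by reducing it to the fundamental commuting property \eqref{d-F*} for the pullback $F_I^*$ together with Stokes' theorem on the simplex $[x_J]$. Recall that $R_{I,J}^k u$ is defined in \eqref{Rdef} as the integral over $[x_J]$ of the component $\Pi_s F_I^* u$, where $|J| = s+1$. The exterior derivative $d$ on the left-hand side is $d_{\S}$, the exterior derivative in the $\S_n$ variable, and differentiation under the integral sign gives $d_{\S}(R_{I,J}^k u)_x = \int_{[x_J]} d_{\S}\Pi_s F_I^* u$. The point is then to express $d_{\S}\Pi_s F_I^* u$ using \eqref{d-F*}, which reads $\Pi_s F_I^* du = d_{\S}\Pi_s F_I^* u - (-1)^{k-s} d_{\partial\S}\Pi_{s-1} F_I^* u$; solving for $d_{\S}\Pi_s F_I^* u$ gives two terms.

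The first term, $\int_{[x_J]} \Pi_s F_I^* du$, is by definition $R_{I,J}^{k+1} du$, since $du$ is a $(k+1)$-form and $|J| = s+1$ with $s \le k+1$, so this is exactly the first term on the right-hand side of the claimed identity. The second term is $(-1)^{k-s}\int_{[x_J]} d_{\partial\S}\Pi_{s-1} F_I^* u$. Here $\Pi_{s-1} F_I^* u$ restricted to $\S_n \times [x_J]$ is (in the $y$-variable) an $(s-1)$-form on the $s$-dimensional simplex $[x_J]$, so I can apply Stokes' theorem in the $y$-variable: $\int_{[x_J]} d_{\partial\S}\Pi_{s-1} F_I^* u = \int_{\partial[x_J]} \Pi_{s-1} F_I^* u$. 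The boundary $\partial[x_J]$ of the oriented simplex $[x_J] = [x_{j_0}, \ldots, x_{j_s}]$ is the signed sum $\sum_{i=0}^s (-1)^i [x_{J(\hat i)}]$, and integrating $\Pi_{s-1} F_I^* u$ over $[x_{J(\hat i)}]$ (a simplex of dimension $s-1$, matching the $(s-1)$-form degree) yields precisely $(R_{I,J(\hat i)}^k u)_x$ by definition \eqref{Rdef}. Summing, $\int_{\partial[x_J]}\Pi_{s-1}F_I^* u = (\delta R_I^k u)_J$, so the second term becomes $(-1)^{k-s}(\delta R_I^k u)_J$, completing the identity.

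I expect the main obstacle to be bookkeeping with signs and orientations: getting the orientation of $\partial[x_J]$ right relative to the chosen orientation on $[x_J]$, confirming that the sign $(-1)^{k-s}$ in \eqref{d-F*} matches the one in the statement, and handling the degenerate cases consistently — namely $s = 0$ (where $[x_J]$ is a point, $R_{I,J}^k = P_{I,j}^*$, the $d_{\partial\S}$ term is absent, and the identity reduces to $d P_{I,j}^* u = P_{I,j}^* du$, the ordinary pullback commuting property), and $s = k+1$ (where $R_{I,J}^k = 0$, $du$ is a $(k+1)$-form over a $(k+1)$-simplex so $R_{I,J}^{k+1}du$ is nonzero, and one must check the boundary-sum term accounts for it). A secondary point requiring care is justifying differentiation under the integral sign and the application of Stokes in the $y$-variable while $x$ is held fixed; this is legitimate because $F_I(\cdot, \cdot)$ is smooth and the integrand depends smoothly on both variables, but it should be stated cleanly. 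Once the sign conventions are pinned down, the proof is essentially the combination of \eqref{d-F*} and Stokes, so it should be short.
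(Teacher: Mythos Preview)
Your proposal is correct and follows essentially the same approach as the paper's proof: differentiate under the integral, apply \eqref{d-F*} to rewrite $d_{\S}\Pi_s F_I^* u$, identify the first term as $R_{I,J}^{k+1}du$, and use Stokes' theorem on $[x_J]$ to convert the second term into the boundary sum $(\delta R_I^k u)_J$. The paper's proof is simply more terse; your added discussion of the degenerate cases $s=0$ and $s=k+1$ and of sign/orientation bookkeeping is reasonable caution but not strictly needed for the argument.
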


\begin{proof}
By applying \eqref{d-F*}, we get
\begin{align*}
dR_{I,J}^k u &= \int_{[x_J]} d_{\S} \Pi_s F_{I}^*u = \int_{[x_J]} [\Pi_s F_{I}^*du 
+ (-1)^{k-s} d_{[x_J]}\Pi_{s-1} F_{I}^*u]\\
&= R_{I,J}^{k+1} du + (-1)^{k-s} \int_{\partial [x_J]} \Pi_{s-1} F_{I}^* u,
\end{align*}
where we have used Stokes theorem for the last equality. 
The proof of the proposition is completed by observing that 
\[
\int_{\partial [x_J]} \Pi_{s-1} F_{I}^* u = (\delta R_I^ku)_{J}.
\]
\end{proof}

\section{The operators  $A_{I,J}^k$ and their properties}
\label{sec:AIJop}
In this section, we define the coefficient operators $A_{I,J}^k$ for
$I \in \Gamma_m$, $ 1 \le m \le n$, and $J \in \Gamma_s(I)$,
$0 \le s \le k \le n-1$, as operators mapping $\Lambda^k(\partial \S_n)$
to $\Lambda^{k-s}(\S_n)$.
 As we saw already in Section~\ref{sec:one-forms}, the
simplexes $[P_{I,J}x]$, defined as all convex combinations of the
points $P_{I,j} x$, $j \in J$, will in general not be a subset of
$\partial \S_n$, unless $I \cap J^c$ is nonempty. As a consequence,
the operators $R_{I,J}^k$ are not well defined for functions in $\Lambda^k(\partial \S_n)$ unless this condition
holds.  We will therefore define the operators $A_{I,J}^k$ as linear
combinations of these order reduction operators in such a way that they are well
defined for functions in $\Lambda^k(\partial \S_n)$.
We have already defined the operators $A_{I,J}^k$ for $J = \{j\} \in \Gamma_0(I) $ by
$A_{I,J}^k u =  R_{I,J}^k u =  P_{I,j}^*u$, and in the case $J \in \Gamma_1(I)$ by 
\eqref{A1Jij}. The general definition given below will generalize the definitions given in these special cases.
%

In the general case, we define the operators $A_{I,J}^k$ of the form 
\begin{equation}\label{def-Ak2}
 A_{I,J}^k =  c_{s,n}^k [(n-s) R_{I,J}^k  
- \sum_{p \in J^c} (\delta R_I^k)_{\{p,J\}}],
\end{equation}
where $c_{s,n}^k$ are constants to be specified below. Here 
\[
(\delta R_I^k)_{\{p,J\}} = R_{I,J}^k - \sum_{i=0}^s (-1)^i R_{I, \{p,J(\hat i)\}}^k,
\]
and the index set $\{p,J(\hat i)\}$ is given by 
\[
\{p,J(\hat i)\} = \{p, j_0, \ldots j_{i-1}, j_{i+1} , \ldots ,j_s\} \quad \text{if } J =  \{j_0, j_1, \ldots ,j_s\}.
\]
Alternatively, we can express the operator $A_{I,J}^k$ as 
\begin{equation}\label{def-Ak}
 A_{I,J}^k =  c_{s,n}^k \sum_{p \in J^c} \sum_{i=0}^s (-1)^{i} 
R_{I, \{p,J(\hat i)\}}^k.
\end{equation}
The constants $c_{n,s}^k$ are  given by
\begin{equation*}
c_{s,n}^k = \frac{(-1)^{1+ ks}(s!)^2}{(n-1) \cdots (n-s)}, \qquad 
1 \le s \le n-1, \quad \text{and } c_{0,n}^k = -1.
\end{equation*}
In fact, the key relations we will use below are that these constants satisfy 
\begin{equation}\label{c-relations}
 \frac{c_{s,n}^{k+1}}{c_{s,n}^{k}} = (-1)^{s} \quad \text{and } 
\frac{c_{s,n}^k}{c_{s-1,n}^k } = 
\frac{s^2}{(n-s)}(-1)^{k},
\end{equation}
which can be easily checked.
Note that when $s=k= 1$ and $n > 1$, we have $c_{1,n}^1 = 1/(n-1)$,
and as consequence, we see that \eqref{def-Ak} generalizes the
definition given in \eqref{A1Jij}.
Furthermore, $A_{I,J}^k = 0$ for $J \in \Gamma_s(I)$, $s >k$.

We next establish three key properties of the operator $A_{I,J}^k$,
using analogous properties established for the operator $R_{I,J}^k$.
\begin{lem}
\label{lem:Atrace}
The operators $A_{I,J}^k u$ only depend on the boundary traces of $u$
and satisfy for any $J \in \Gamma(I)$ and $j \in I^c$,
\begin{equation}
\label{trace-A}
\tr_{\lambda_j =0} A_{I^\prime,J}^k u = \tr_{\lambda_j =0} A_{I,J}^k u,
\end{equation}
where $I^\prime \in \Gamma$ is equal to $\{j, I \}$ up to a possible reordering. 
Furthermore, the operators $A_{I,J}^n$ are all identically zero.
 \end{lem}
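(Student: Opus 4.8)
The plan is to derive each of the three assertions of Lemma~\ref{lem:Atrace} directly from the corresponding properties of the operators $R_{I,J}^k$ recorded in Lemma~\ref{lem:Rtrace}, using the explicit formula \eqref{def-Ak}. First I would observe that the operator $A_{I,J}^k$ is, by \eqref{def-Ak}, a linear combination of terms $R_{I,\{p,J(\hat i)\}}^k$ with $p \in J^c$, and that in every such term the index set appearing is $\{p,J(\hat i)\}$. Since $p \notin J$ and $J(\hat i) \subseteq J$, the index $p$ lies in the complement of $\{p,J(\hat i)\}$ only vacuously; the point is rather that $p \in I^c$ is \emph{not} forced. So I should be careful: the relevant observation is that for \emph{each} such term the condition making $R$ depend only on boundary traces must be checked. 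Here one uses that $J \in \Gamma_s(I)$, so $J \subset I$, and since $p \in J^c$ there are two cases. If $p \notin I$, then $p \in I \cap \{p,J(\hat i)\}^c{}^c$... more cleanly: $I$ contains $J$ but may or may not contain $p$; in either case, because $J(\hat i)$ is a proper subset of $J$ (one index removed) and $J \subseteq I$, the index $j_i \in I$ that was removed satisfies $j_i \notin \{p,J(\hat i)\}$, hence $j_i \in I \cap \{p,J(\hat i)\}^c$, so $I \cap \{p,J(\hat i)\}^c \neq \emptyset$. By Lemma~\ref{lem:Rtrace}, each term $R_{I,\{p,J(\hat i)\}}^k u$ depends only on $\tr_{\partial\S_n} u$, and therefore so does $A_{I,J}^k u$. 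This settles the first assertion, \emph{provided} $s \geq 1$; the case $s=0$ is the already-established fact $A_{I,\{j\}}^k u = P_{I,j}^* u$, whose pullback structure only sees $u$ on the face $\{x : \lambda_i = 0,\ i \in I,\ i \neq j\} \subset \partial\S_n$.

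Next I would prove the commutation relation \eqref{trace-A}. Fix $j \in I^c$ and $I^\prime = \{j,I\}$. Applying \eqref{def-Ak} to both $A_{I,J}^k$ and $A_{I^\prime,J}^k$ — note the constant $c_{s,n}^k$ and the combinatorial structure of the sum over $p \in J^c$ and $i$ are \emph{identical} for $I$ and $I^\prime$ since they depend only on $s$ and $n$ and on $J$ (and $J \subset I \subset I^\prime$, so $J \in \Gamma_s(I^\prime)$ as well) — the difference $A_{I^\prime,J}^k u - A_{I,J}^k u$ is $c_{s,n}^k$ times the same linear combination of differences $R_{I^\prime,\{p,J(\hat i)\}}^k u - R_{I,\{p,J(\hat i)\}}^k u$. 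By the first part of Lemma~\ref{lem:Rtrace} applied with this $j \in I^c$ (and $I^\prime = \{j,I\}$), each such difference has vanishing trace on $\{\lambda_j = 0\}$. Summing, $\tr_{\lambda_j=0}(A_{I^\prime,J}^k u - A_{I,J}^k u) = 0$, which is \eqref{trace-A}. One small bookkeeping point: I must check $p \in J^c$ is the index set of summation for \emph{both} $I$ and $I^\prime$; since $J$ is the same in both and $J^c$ is taken relative to $\{0,\dots,n\}$, this is automatic.

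Finally, for the vanishing of $A_{I,J}^n$, I would invoke the last clause of Lemma~\ref{lem:Rtrace}: for $k=n$, the operators $R_{I,L}^n$ are identically zero whenever $I \cap L^c$ is nonempty. As argued in the first paragraph, every index set $L = \{p,J(\hat i)\}$ occurring in \eqref{def-Ak} satisfies $I \cap L^c \neq \emptyset$ (witnessed by the removed index $j_i \in I$), so every term $R_{I,\{p,J(\hat i)\}}^n u$ vanishes, hence $A_{I,J}^n u = 0$. For $s = 0$ this needs the separate remark that $A_{I,\{j\}}^n u = P_{I,j}^* u$ and $R_{I,\{j\}}^n$ is also covered by the same vanishing statement in Lemma~\ref{lem:Rtrace} (the range of $P_{I,j}$ lies in a proper face, so the pulled-back top-degree form vanishes). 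I expect the main obstacle — really a point requiring care rather than a genuine difficulty — to be the first paragraph: correctly identifying that in \emph{every} summand of \eqref{def-Ak} the set $I \cap \{p,J(\hat i)\}^c$ is nonempty, and handling the $s=0$ base case separately since the formula \eqref{def-Ak} with its sum over $p \in J^c$ and the single term $i=0$ must be checked to reduce to the stated special case $A_{I,\{j\}}^k = P_{I,j}^*$ up to the constant $c_{0,n}^k = -1$ and the telescoping over $p$.
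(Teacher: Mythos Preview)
Your proof is correct and follows essentially the same route as the paper: identify that in every summand $R_{I,\{p,J(\hat i)\}}^k$ of \eqref{def-Ak} the removed index $j_i$ lies in $I \cap \{p,J(\hat i)\}^c$, and then invoke the three clauses of Lemma~\ref{lem:Rtrace} term by term. The paper compresses all of this into two sentences, whereas you spell out the witness $j_i$ explicitly and treat the $s=0$ case separately --- both welcome clarifications.

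One small correction to your closing remark: the formula \eqref{def-Ak} does \emph{not} in fact reduce to $P_{I,j}^*$ when $s=0$ (a direct computation gives $-\sum_{p\neq j}P_{I,p}^*$), so there is no telescoping to check. The paper simply takes $A_{I,\{j\}}^k = P_{I,j}^*$ as a separate definition for $s=0$, and your argument for that case --- noting $|I|\ge 2$ so that $I\cap\{j\}^c \neq \emptyset$ and Lemma~\ref{lem:Rtrace} applies to $R_{I,\{j\}}^k = P_{I,j}^*$ directly --- is the right one.
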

\begin{proof}
For each $i \in J \subset
\Gamma_s(I)$, we have that $i \in I \cap \{p,J(\hat i)\}^c$. 
As a consequence, $A_{I,J}^k u$ only depends on $\tr_{\partial \S_n} u$.
The rest of the results follow
directly from the corresponding 
properties of the operators $R_{I,J}^k$ given in Lemma~\ref{lem:Rtrace}.
\end{proof}

\begin{lem}
\label{lem:Aprop1}
Let $I \in \Gamma$ and $J \in \Gamma_s(I)$. Then
the operator $\lambda_I^{-s}A_{I,J}^k u$ maps the spaces
$\Lambda^k(\partial \S_n) \to\Lambda^{k-s}(\S_n)$,
$\P_r\Lambda^k(\partial \S_n) \to \P_r \Lambda^{k-s}(\S_n)$,
and $\P_r^-\Lambda^k(\partial \S_n) \to\P_{r}^-\Lambda^{k-s}(\S_n)$.
\end{lem}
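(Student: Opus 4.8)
The plan is to obtain all three mapping properties term by term from Lemma~\ref{lem:Rprop1}, exploiting the expansion \eqref{def-Ak} of $A_{I,J}^k$ as a linear combination of order-reduction operators. For $s \ge 1$ this expansion gives
\[
\lambda_I^{-s} A_{I,J}^k u = c_{s,n}^k \sum_{p \in J^c} \sum_{i=0}^s (-1)^{i}\, \lambda_I^{-s} R_{I,\{p,J(\hat i)\}}^k u ,
\]
so it suffices to control each summand $\lambda_I^{-s} R_{I,\{p,J(\hat i)\}}^k$. The first point to record is that every index set $\{p,J(\hat i)\}$ appearing here has exactly $s+1$ elements: $|J(\hat i)| = s$, and $p \in J^c$ forces $p \notin J \supseteq J(\hat i)$. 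Hence each such summand is normalized by precisely the power of $\lambda_I$ required in Lemma~\ref{lem:Rprop1}, and it carries $k$-forms to $(k-s)$-forms.

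The second step is to check, for every term, the hypothesis $I \cap \{p,J(\hat i)\}^c \neq \emptyset$ which—via Lemma~\ref{lem:Rtrace} and the closing sentence of Lemma~\ref{lem:Rprop1}—lets one pass from a condition on $u \in \Lambda^k(\S_n)$ to a condition on $\tr_{\partial \S_n} u$, and thereby makes the operator meaningful on forms given only on $\partial \S_n$. Since $J \in \Gamma_s(I)$, the index $j_i$ lies in $I$; it is deleted in passing to $J(\hat i)$, and $j_i \neq p$ because $p \in J^c$, so $j_i \in I \cap \{p,J(\hat i)\}^c$. Consequently each operator $\lambda_I^{-s} R_{I,\{p,J(\hat i)\}}^k$ is well defined on $\Lambda^k(\partial \S_n)$, maps it into $\Lambda^{k-s}(\S_n)$, maps $\P_r\Lambda^k(\partial \S_n)$ into $\P_r\Lambda^{k-s}(\S_n)$, and maps $\P_r^-\Lambda^k(\partial \S_n)$ into $\P_r^-\Lambda^{k-s}(\S_n)$. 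As each of these three target spaces is a vector space, the linear combination $\lambda_I^{-s} A_{I,J}^k u$ lies in the same space, giving the claim for $s \ge 1$. The case $s = 0$ is handled directly: there $A_{I,J}^k u = R_{I,\{j\}}^k u = P_{I,j}^* u$ with $j \in I$, so $I \cap \{j\}^c = I \setminus \{j\} \neq \emptyset$ since $|I| \ge 2$, and Lemma~\ref{lem:Rprop1} applies verbatim; the case $s > k$ is trivial because then $A_{I,J}^k = 0$.

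I do not expect a real obstacle: the lemma is essentially a corollary of Lemma~\ref{lem:Rprop1}, together with the elementary fact that the relevant spaces are closed under linear combinations. The only points deserving care are the cardinality bookkeeping that makes $\lambda_I^{-s}$ the correct normalization for every summand (namely $|\{p,J(\hat i)\}| = s+1$), and the observation $j_i \in I \cap \{p,J(\hat i)\}^c$, which is exactly what ensures each contributing $R$-operator depends only on boundary data and hence that $A_{I,J}^k$ is well defined on $\Lambda^k(\partial \S_n)$ in the first place.
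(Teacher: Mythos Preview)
Your argument is correct and follows exactly the paper's approach: both proofs reduce to Lemma~\ref{lem:Rprop1} via the expansion \eqref{def-Ak}, using that $j_i \in I \cap \{p,J(\hat i)\}^c$ so each summand depends only on boundary data. Your version simply spells out the cardinality bookkeeping and the $s=0$ case (which implicitly uses $|I|\ge 2$, valid since $A_{I,J}^k$ is only defined for $I\in\Gamma_m$ with $m\ge 1$) that the paper leaves implicit.
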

\begin{proof}
Since $A_{I,J}^k u$ is a linear combination of
operators of the form $R_{I, \{p,J(\hat i)\}} u$, 
for which $I \cap \{p,J(\hat i)\}^c$ is nonempty,
the lemma follows directly
from corresponding properties of the operators $R_{I,J}^k$, 
cf. Lemma~\ref{lem:Rprop1}.
\end{proof}
%

\begin{prop}
\label{prop:A-commute-prop}
Let $I \in \Gamma$ and $J \in \Gamma_s(I)$. The operators $A_{I,J}^k u$ satisfy 
the relations
\begin{equation}
\label{A-commute}
A_{I,J}^{k+1} du=  (-1)^{s}d(A_{I,J}^{k} u )
+ s (\delta A_I^k u)_{J},   0
 \le s \le k+1, \, 0 \le k \le n-1.
\end{equation}
\end{prop}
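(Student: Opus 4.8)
The plan is to derive \eqref{A-commute} directly from the definition \eqref{def-Ak} of $A_{I,J}^k$ as a signed sum of order-reduction operators $R_{I,\{p,J(\hat i)\}}^k$, together with the commuting relation for the $R$ operators established in Proposition~\ref{prop:d-of-Q}, namely $d(R_{I,K}^k u) = R_{I,K}^{k+1}du + (-1)^{k-s'}(\delta R_I^k u)_K$ for an index set $K$ with $|K| = s'+1$. The essential combinatorial input is also the relation between the constants in \eqref{c-relations}, in particular $c_{s,n}^{k+1}/c_{s,n}^k = (-1)^s$, which is precisely what converts $d A_{I,J}^k$ (built from $c_{s,n}^k$) into the $(-1)^s$ prefactor multiplying $A_{I,J}^{k+1}du$ (built from $c_{s,n}^{k+1}$).

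Concretely, first I would treat the degenerate cases: $s = 0$ is just the statement $A_{I,\{j\}}^{k+1}du = d(A_{I,\{j\}}^k u)$, which follows from $A_{I,\{j\}}^k = P_{I,j}^*$ and the fact that pullbacks commute with $d$; and $s = k+1$ forces $A_{I,J}^{k+1} = 0$ on the left (since then $|J| = k+2 > k+1$), so one must check the right side vanishes too — this should come out of the $\delta A$ term telescoping against $d(A_{I,J}^k u)$ with $A_{I,J}^k = 0$ when $s > k$. For the generic range $1 \le s \le k$, I would apply $d$ to \eqref{def-Ak}, use Proposition~\ref{prop:d-of-Q} term by term on each $R_{I,\{p,J(\hat i)\}}^k u$ (here the relevant index set has length $s+1$, so the sign is $(-1)^{k-s}$), and collect the result into two groups: one group reassembling $\sum_{p,i} R_{I,\{p,J(\hat i)\}}^{k+1}du$, which up to the constant ratio $(-1)^s$ is exactly $A_{I,J}^{k+1}du$; and a second group of $\delta R$ terms. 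The task is then to recognize the second group as a multiple of $(\delta A_I^k u)_J = \sum_{\ell=0}^s (-1)^\ell A_{I,J(\hat\ell)}^k u$, again expanded via \eqref{def-Ak}.

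The main obstacle will be the bookkeeping in that second group. When one applies $\delta R_I^k$ to the index set $\{p, J(\hat i)\}$, one gets a sum over removing each element: removing $p$ gives $R_{I,J(\hat i)}^k$, and removing one of the $j$'s from $J(\hat i)$ gives $R_{I,\{p, J(\hat i, \hat \ell)\}}^k$ with two indices deleted from $J$. The double-deletion terms must cancel among themselves by the usual antisymmetry argument — deleting $i$ then $\ell$ versus $\ell$ then $i$ carry opposite signs — leaving only the single-deletion terms $R_{I,J(\hat i)}^k$. Then one must match the coefficient: the surviving sum $\sum_i (-1)^? \sum_{p \in \{p, J(\hat i)\}^c}\cdots$ has to be reorganized so that the inner sum runs over $(J(\hat i))^c$, which differs from $J^c$ by the element $j_i$; the "extra" term $p = j_i$ is exactly $R_{I,J(\hat i)}^k$ summed over nothing new, and careful sign accounting using the second relation in \eqref{c-relations}, $c_{s,n}^k/c_{s-1,n}^k = (-1)^k s^2/(n-s)$, should produce the integer coefficient $s$ in front of $(\delta A_I^k u)_J$. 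I expect the sign verification — reconciling the $(-1)^{k-s}$ from Proposition~\ref{prop:d-of-Q}, the $(-1)^i$ from \eqref{def-Ak}, the shift of one index position when passing from $J$ to $J(\hat\ell)$, and the two constant ratios — to be the genuinely delicate part, and I would organize it by writing both sides as explicit triple sums over $(p, i, \ell)$ and comparing coefficients of each $R_{I,K}^k u$ termwise.
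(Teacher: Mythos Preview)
Your strategy---differentiate the defining sum for $A_{I,J}^k$, apply Proposition~\ref{prop:d-of-Q} to each $R_{I,\{p,J(\hat i)\}}^k$, then use the constant ratios \eqref{c-relations} to regroup---is exactly the paper's. The paper, however, works from the equivalent representation \eqref{def-Ak2}, $A_{I,J}^k = c_{s,n}^k\big[(n-s)R_{I,J}^k - \sum_{p\in J^c}(\delta R_I^k)_{\{p,J\}}\big]$, rather than \eqref{def-Ak}. This buys two simplifications: when $d$ hits the second term, the $\delta$-contribution vanishes instantly by $\delta\circ\delta=0$, so the only surviving ``second group'' is $c_{s,n}^k(n-s)(-1)^{k-s}(\delta R_I^k u)_J$; and the paper then isolates as an explicit intermediate identity that $(\delta A_I^k u)_J = -s\,c_{s-1,n}^k\,(\delta R_I^k u)_J$ (equation \eqref{delta-A}), proved again via \eqref{def-Ak2} and $\delta\circ\delta=0$, after which the second ratio in \eqref{c-relations} finishes the job.

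Your sketch of the matching step from \eqref{def-Ak} is where the plan goes slightly astray. After your double-deletion cancellation the surviving second group is already the scalar multiple $c_{s,n}^k(-1)^{k-s}(n-s)(\delta R_I^k u)_J$; there is no remaining $p$-sum to ``reorganize over $(J(\hat i))^c$''. To convert this into $s(\delta A_I^ku)_J$ you need precisely identity \eqref{delta-A}. If you insist on proving that from \eqref{def-Ak}, the raw expansion of $(\delta A_I^k u)_J$ produces terms $R_{I,\{q,J(\hat\ell,\hat a)\}}^k$ with $q\in(J(\hat\ell))^c$---not single-deletion terms---and a \emph{second} $\delta\circ\delta$-type cancellation (for $q\in J^c$) together with the reordering identity $R_{I,\{j_\ell,J(\hat\ell,\hat a)\}}^k = \pm R_{I,J(\hat a)}^k$ (for $q=j_\ell$) are required before the single-deletion terms emerge with the correct coefficient $-s\,c_{s-1,n}^k$. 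So the reindexing you allude to lives on the $\delta A$ side, not on the $dA$ side. (Also, the case $s=k+1$ needs no separate telescoping argument; it is covered by the generic $s\ge1$ computation since $R_{I,K}^k=0$ whenever $|K|>k+1$.)
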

\begin{proof}
For $s=0$, the relation $dA_{I.J}^k u = A_{I,J}^k du$ follows from the
corresponding property of the pullbacks $P_{I,j}^*$.
Next we show that for 
for $s \ge 1$ we have 
\begin{equation}\label{delta-A}
(\delta A_I^ku)_{J} = - s c_{s-1,n}^k (\delta R_I^k u)_{J}. 
 \end{equation}
From \eqref{def-Ak2}, it follows that 
\[
(\delta A_I^ku)_{J}  =  c_{s-1,n}^k [(n-s+1) (\delta R_I^ku)_{J} - (\delta W)_{J} ],
\]
where $W_{J} = \sum_{p \in J^c} (\delta R_I^k u)_{\{p,J\}}$ for $I$ and $k$ fixed.
However, 
\begin{multline*}
(\delta W)_{J} = \sum_{a =0}^s (-1)^a [(\delta R_I^ku)_{\{a,J(\hat a)\}}
+ \sum_{p \in J^c} (\delta R_I^ku)_{\{p,J(\hat a)\}}]
\\
= (s+1) (\delta R_I^ku)_{J} 
+ \sum_{p \in J^c}  \sum_{a =0}^s (-1)^a R_{I,J(\hat a)}^k u
 -  \sum_{p \in J^c} ((\delta \circ \delta) R_{I,p}^ku)_J
\\
= (n+1)  (\delta R_I^ku)_{J},
\end{multline*}
where we have used the fact that for each fixed $p$,
\[
((\delta \circ \delta) R_{I,p}^ku)_J=
\sum_{a =0}^s (-1)^a \Big[ \sum_{i=0}^{a-1}(-1)^i R_{I,\{p,J(\hat a, \hat i) \}}   
- \sum_{i=a+1}^{s}(-1)^i R_{I,\{p,J(\hat a, \hat i) \}}\Big]  = 0.
\]
This implies \eqref{delta-A}.
For $s \ge 1$, it
follows from Proposition~\ref{prop:d-of-Q}, \eqref{c-relations}, \eqref{delta-A}, and the property $\delta \circ \delta = 0$ that
\begin{multline*}
d(A_{I,J}^k u) = c_{s,n}^k [(n-s) dR_{I,J}^k u - 
\sum_{p \in J^c} (\delta d R_I^k u)_{\{p,J\}}]\\
= c_{s,n}^k [ (n-s) R_{I,J}^{k+1} du -  \sum_{p \in J^c}(\delta R_I^{k+1} du)_{\{p,J\}}]
+ c_{s,n}^k (n-s) (-1)^{k-s} (\delta R_I^k u)_{J}\\
= \frac{c_{s,n}^k}{c_{s,n}^{k+1}} A_{I,J}^{k+1} du 
+ \frac{c_{s,n}^k(n-s)}{sc_{s-1,n}^k } (-1)^{k-s+1}  
(\delta A_I^k u)_{J}.
\\
= (-1)^s A_{I,J}^{k+1} du + s(-1)^{s+1}(\delta A_I^k u)_{J}.
\end{multline*}
This completes the proof.
\end{proof}

\section{The operator $E_n^k$}
In this final section, we prove that the operators $E_n^k$, defined by
\eqref{def-Tk}, are polynomial preserving cochain extensions.
\label{sec:Tnk}


\begin{thm}\label{thm:extension}
The operators $E_n^k$, defined by \eqref{def-Tk}, are extension operators.
\end{thm}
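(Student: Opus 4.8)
The plan is to show that $\tr_{\partial \S_n} E_n^k u = u$ for all $u \in \Lambda^k(\partial \S_n)$, following the pattern already used for zero forms and one forms. By symmetry it suffices to compute the trace on the face $\{\lambda_0 = 0\}$, say $\tr_{\lambda_0=0}$. I would split the sum defining $E_n^k u$ in \eqref{def-Tk} into the part with $s=0$ (the terms $(\lambda_j/\lambda_I)P_{I,j}^* u$, $j \in I$) and the part with $1 \le s \le k$ (the terms $(\phi_J/\lambda_I^{s+1}) \wedge A_{I,J}^k u$, $J \in \Gamma_s(I)$).

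\textbf{The $s=0$ part.} This is exactly the zero-form computation applied coefficientwise, and I would argue it reproduces $\tr_{\lambda_0=0} u$ by the same two observations as in Section~\ref{sec:zero-forms}: first, the terms with $I \in \Gamma_1$, $0 \in I$, sum (after restriction to $\lambda_0=0$) to $u$ because $P_{I,j}^* u$ restricted to $\lambda_0 = 0$ becomes $u$ when $\{j\} = I \setminus \{0\}$ hits the right face and because $\lambda_0 + \lambda_j = \lambda_I$; second, for every $I$ with $0 \notin I$, the term for $I$ and the term for $I' = \{0,I\}$ carry opposite signs $(-1)^{m+1}$ versus $(-1)^{m+2}$ and have equal restrictions to $\lambda_0 = 0$, since $\lambda_0 = 0$ forces $\lambda_{I'} = \lambda_I$ and $P_{I,j} = P_{I',j}$ on that face. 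Hence the $s=0$ part contributes exactly $\tr_{\lambda_0=0} u$.

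\textbf{The $s \ge 1$ part.} Here I would show the total contribution to $\tr_{\lambda_0=0}$ vanishes. Split $\Gamma_s(I)$ according to whether $0 \in J$ or not. If $0 \in J$, then $\tr_{\lambda_0=0} \phi_J = 0$ (every term of the Whitney form $\phi_J$ contains a factor $\lambda_{j_i}$ or a factor $d\lambda_{j_i}$; when summed, one checks $\tr_{\lambda_0 = 0}\phi_J=0$ exactly as noted for the one-form case), so those terms die. If $0 \notin J$, then $J \in \Gamma_s(I)$ requires $0 \notin I$ is \emph{not} forced — but we can still pair $I$ with $I' = \{0,I\}$ whenever $0 \notin I$: both contain $J$ as a subset of $\Gamma_s(I)$ and $\Gamma_s(I')$, they carry opposite signs, $\phi_J/\lambda_I^{s+1}$ and $\phi_J/\lambda_{I'}^{s+1}$ agree on $\lambda_0=0$ since $\lambda_{I'}=\lambda_I$ there, and by Lemma~\ref{lem:Atrace} (the relation \eqref{trace-A} with $j=0 \in I'^c \cap (I')\setminus I$... more precisely $0 \in I^c$) we have $\tr_{\lambda_0=0} A_{I,J}^k u = \tr_{\lambda_0=0} A_{I',J}^k u$. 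So these terms cancel in pairs. The only $J$ with $0 \notin J$ that remain unpaired would be those coming from $I$ with $0 \in I$; but for such $I$, if $J \subset I$ and $0 \notin J$ then $I \cap J^c \ni 0$, and writing $I = \{0, I'\}$ with $J \in \Gamma_s(I')$, the term for $I$ cancels against the term for $I'$ by the identical reasoning. Thus every $s \ge 1$ term is cancelled, and the $s \ge 1$ part contributes $0$ to $\tr_{\lambda_0=0} E_n^k u$.

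Combining, $\tr_{\lambda_0 = 0} E_n^k u = \tr_{\lambda_0=0} u$, and since the face was arbitrary, $\tr_{\partial \S_n} E_n^k u = u$, proving $E_n^k$ is an extension. The main obstacle I anticipate is the bookkeeping around the Whitney forms: one must verify carefully that $\tr_{\lambda_0=0}\phi_J = 0$ when $0 \in J$ and that the wedge factor $\phi_J/\lambda_I^{s+1}$ genuinely restricts identically on the two members of each $(I, I')$ pair, so that the cancellation $(-1)^{m+1} + (-1)^{m+2} = 0$ actually applies termwise after restriction; the argument for the coefficient operators $A_{I,J}^k$ is already packaged cleanly in Lemma~\ref{lem:Atrace}, so the remaining work is exactly this combinatorial pairing, which is routine once organized as above.
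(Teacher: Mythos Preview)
Your proof is correct and follows essentially the same approach as the paper: split into the $s=0$ ``primal'' part (which is an extension by the zero-form argument) and the $s\ge 1$ part (which vanishes on each face by killing the $0\in J$ terms via $\tr_{\lambda_0=0}\phi_J=0$ and cancelling the $0\notin J$ terms in $(I,\{0,I\})$ pairs using \eqref{trace-A}). Your two paragraphs on the $0\notin J$ case describe the same pairing from both sides, so the second is redundant rather than an additional step; the paper states this pairing once and is otherwise identical.
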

\begin{proof}
  We first note since the coefficients  $A_{I,J}^k u$ only depend on the boundary
  traces of $u$, the same is true of $E_n^ku$.  To establish the
  extension property, we will, without  loss of generality,  show that 
$\tr_{\lambda_0 =0} E_n^k u = \tr_{\lambda_0 =0} u.$ 
In fact, the primal operator $E_{n,0}^k$, given by 
\[
E_{n,0}^ku   = \frac{1}{n} \sum_{\substack{I \in \Gamma_m\\ 1 \le m \le n}} (-1)^{m+1} 
\sum_{j \in I}
\frac{\lambda_j}{\lambda_I} P_{I,j}^* u,
\]
is already an extension.
To see this we can argue as we have done above for scalar valued functions and one-forms.
The maps $P_{I,j}$, where $I \in \Gamma_1$ with $0,j \in I$ and $ j \neq 0$  
are projections onto $\{ x \, : \, \lambda_0(x) = 0 \}$. As a consequence, 
\[
\tr_{\lambda_0 =0}  \Big[\frac{1}{n} \sum_{\substack{I \in \Gamma_1\\ 0 \in I}}  
\sum_{j \in I}
\frac{\lambda_j}{\lambda_I} P_{I,i}^*\Big] u = \tr_{\lambda_0 =0} u,
\]
while the rest of the terms of $E_{n,0}^k u$ give no additional
contribution to the trace due to the cancellation of terms
corresponding to $I$ and $I^\prime = \{0,I \}$, where $0 \notin I$.
So to complete the proof, we need to show that
\[
\tr_{\lambda_0 =0} \Big[\frac{1}{n} \sum_{\substack{I \in \Gamma_m\\ 1 \le m\le n}} 
(-1)^{m+1}  \sum_{\substack{J \in \Gamma_s(I) \\ 1\le s \le k}}
\frac{\phi_J}{\lambda_I^{s+1}} \wedge A^k_{I,J} u \Big]  =0.
\] 
However, for terms corresponding to pairs $(I,J)$, with $0 \in J$, we
have $\tr_{\lambda_0=0} \phi_J= 0$, while if $0 \notin J$, the trace
of the terms corresponding to the pairs $(I,J)$ and $(\{0,I \},J)$
will cancel due to the trace property \eqref{trace-A} of the
coefficients.
\end{proof}

\begin{thm}\label{thm:cochain}
  The extensions $E_n^k$ are cochain maps, i.e., they satisfy
  $dE_n^{k} = E_n^{k+1} d$ for $0 \le k \le n-2$.  In
    addition, $d E_n^{n-1} u =0$.
\end{thm}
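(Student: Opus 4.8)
The plan is to verify the commuting relation $dE_n^k = E_n^{k+1}d$ directly from the explicit formula \eqref{def-Tk} by differentiating term by term, using the product rule for the wedge product. Writing $E_n^k u$ as a sum over pairs $(I,J)$ of terms $(-1)^{m+1}n^{-1}(\phi_J/\lambda_I^{s+1})\wedge A_{I,J}^k u$, applying $d$ produces two contributions for each term: one where $d$ hits the rational $(k-s)$-form coefficient $\phi_J/\lambda_I^{s+1}$, and one where $d$ hits $A_{I,J}^k u$, with a sign $(-1)^{s}$ from moving $d$ past the $(k-s)$-form factor (actually past $\phi_J$, which is an $s$-form, so the sign is $(-1)^s$). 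For the second contribution I would invoke Proposition~\ref{prop:A-commute-prop}, which rewrites $(-1)^s\,d(A_{I,J}^k u)$ as $A_{I,J}^{k+1}du - s(\delta A_I^k u)_J$. The first family of terms, involving $\phi_J\wedge A_{I,J}^{k+1}du/\lambda_I^{s+1}$, already has exactly the shape of a summand of $E_n^{k+1}du$, so the strategy is to show that everything left over — the $d(\phi_J/\lambda_I^{s+1})$ terms and the $s(\delta A_I^k u)_J$ correction terms — cancels, after a reindexing of the $J$-sums.

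The key computation I expect to need is the exterior derivative of the rational coefficient. Using $d\phi_J = (s+1)\,d\lambda_{j_0}\wedge\cdots\wedge d\lambda_{j_s}$ (a standard Whitney-form identity) and $d(\lambda_I^{-(s+1)}) = -(s+1)\lambda_I^{-(s+2)}d\lambda_I = -(s+1)\lambda_I^{-(s+2)}\sum_{i\in I}d\lambda_i$, one gets an expression for $d(\phi_J/\lambda_I^{s+1})$ as a sum of terms $\pm\,\phi_{J'}/\lambda_I^{s+2}$ with $J' = \{i,J\}$ for $i\in I$, $i\notin J$, mirroring the one-form computation $d(\lambda_j/\lambda_I) = \sum_{i\in I,\,i\ne j}\phi_{i,j}/\lambda_I^2$ carried out in Section~\ref{sec:one-forms}. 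So the $d(\phi_J/\lambda_I^{s+1})\wedge A_{I,J}^k u$ terms reorganize into a sum over $(I,J')$ with $|J'|=s+2$ of terms $(\phi_{J'}/\lambda_I^{s+2})\wedge(\text{antisymmetrized combination of }A_{I,J'(\hat i)}^k u)$, i.e., into terms built from $(\delta A_I^k u)_{J'}$. On the other side, the correction terms $-s(\delta A_I^k u)_J$ coming from Proposition~\ref{prop:A-commute-prop} are attached to $\phi_J/\lambda_I^{s+1}$ with $|J|=s+1$; after shifting the index $s\mapsto s-1$ (so that $|J|$ plays the role of the new $s+1$), these too become terms of the form $(\phi_J/\lambda_I^{s+1})\wedge(\delta A_I^{k}u)_J$. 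The heart of the proof is then a bookkeeping identity showing these two families cancel against each other term by term — this is where the precise constants $c_{s,n}^k$ and the relations \eqref{c-relations} enter, since $A_{I,J}^k$ was defined with exactly those constants to make this work (compare the one-form case, where $A_{I,J}^1$ was engineered so that $A_{I,J}^1 du = (\delta P_I^* u)_J$).

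I anticipate the main obstacle is precisely this final cancellation: keeping track of all the signs (from the wedge ordering, from $\delta$, from the $(-1)^{m+1}$ and $(-1)^{ks}$ factors, and from $d\lambda_I$ vs.\ $d\phi_J$) and confirming that the combinatorial coefficients match. A clean way to organize it is to fix the target index set $J'$ with $|J'| = s+2$ appearing in $E_n^{k+1}du$ (or rather in the leftover), collect every way it arises — from differentiating $\phi_J$ with $J = J'(\hat i)$, and from the $\delta A$ correction with $J = J'$ reindexed — and check the net coefficient is zero using \eqref{c-relations} and $\delta\circ\delta = 0$. One subtlety worth isolating: when $d\lambda_I$ contributes an index $i\in I$ that already lies in $J$, the wedge $d\lambda_i\wedge\phi_J$ may vanish or collapse, so those degenerate terms must be handled (they should be absorbed into the power-of-$\lambda_I$ change rather than producing a new $J'$). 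Finally, the claim $dE_n^{n-1}u = 0$ follows because $dE_n^{n-1}u = E_n^n du$ by the commuting relation applied formally, $du\in\Lambda^n(\partial\S_n)=0$ since $\partial\S_n$ is $(n-1)$-dimensional; alternatively, and more honestly within the formula, every coefficient $A_{I,J}^n$ vanishes by Lemma~\ref{lem:Atrace}, so the $k=n$ version of \eqref{def-Tk} is identically zero and the same term-by-term differentiation shows $dE_n^{n-1}u$ reduces to this vanishing object.
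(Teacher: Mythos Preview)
Your approach is essentially the same as the paper's: apply the Leibniz rule to each summand, use Proposition~\ref{prop:A-commute-prop} to rewrite $(-1)^s d(A_{I,J}^k u)$, compute $d(\phi_J/\lambda_I^{s+1})$ explicitly, and reindex so that the $(\delta A_I^k u)_J$ contributions cancel. Two small corrections: the constants $c_{s,n}^k$ and the relations \eqref{c-relations} are \emph{not} invoked again here --- they were used in the proof of Proposition~\ref{prop:A-commute-prop} and are already absorbed into the clean identity \eqref{A-commute}, so the cancellation you anticipate is purely the combinatorial matching of the factor $(s+1)$ from $d(\phi_J/\lambda_I^{s+1}) = (s+1)\sum_{i\in I\setminus J}\phi_{\{i,J\}}/\lambda_I^{s+2}$ against the factor $s$ in \eqref{A-commute} after the shift $s\mapsto s+1$; and the ``subtlety'' you flag about $i\in J$ dissolves once you verify this derivative formula, since the $i\in J$ contributions to $d\lambda_I\wedge\phi_J$ combine with $d\phi_J$ to leave only the sum over $i\in I\setminus J$.
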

\begin{proof}
We first observe that for $I \in \Gamma$ and $J \in \Gamma_s(I)$, we have
\begin{equation}\label{dphi-over-lambda}
d\Big(\frac{\phi_J}{\lambda_I^{s+1}}\Big)  = (s +1) \sum_{i \in I \setminus J} 
\frac{\phi_{\{i,J\}}}{\lambda_I^{s+2}}.
\end{equation}
In particular, the right hand side is zero if $J=I$.
By the Leibniz rule, we have
\begin{multline*}
d E_n^ku  = \frac{1}{n} \sum_{\substack{I \in \Gamma_m\\ 1 \le m \le n}} (-1)^{m+1}
\sum_{\substack{J \in \Gamma_s(I) \\ 0 \le s \le k}}
 d \Big[\frac{\phi_J}{\lambda_I^{s+1}} \wedge  A_{I,J}^{k} u\Big] 
\\
= \frac{1}{n} \sum_{\substack{I \in \Gamma_m\\ 1 \le m \le n}} (-1)^{m+1}
\sum_{s=0}^{k} \sum_{J \in \Gamma_s(I)}
\Big[ d\Big(\frac{\phi_J}{\lambda_I^{s+1}} \Big)\wedge 
(A_{I,J}^{k} u)
+ (-1)^{s}\frac{\phi_J}{\lambda_I^{s+1}} d(A_{I,J}^k u)\Big].
\end{multline*}
However, by using \eqref{dphi-over-lambda}, we obtain for each fixed
$I \in \Gamma$,
\begin{align*}
\sum_{s=0}^{k}  \sum_{J \in \Gamma_s(I)}
d \Big(\frac{\phi_J}{\lambda_I^{s+1}} \Big)
\wedge (A_{I,J}^{k} u)
&= \sum_{s=0}^{k} (s+1) \sum_{J \subset \Gamma_s(I)}
\sum_{i \in I \setminus J}  
\frac{\phi_{i,J}}{\lambda_I^{s+2}}  \wedge (A_{I,J}^{k} u)
\\
&=\sum_{s=0}^{k} (s+1)\sum_{J \in \Gamma_{s+1}(I)}
 \frac{\phi_{J}}{\lambda_I^{s+2}} \wedge 
\sum_{i = 0}^{s+1} (-1)^i A_{I, J(\hat i)}^k u
\\
&=\sum_{s=0}^{k+1} s\sum_{J \in \Gamma_s(I)}
 \frac{\phi_{J}}{\lambda_I^{s+1}} \wedge (\delta A_I^k u)_{J}.
\end{align*}
Combining these results, noting that $A_{I,J}^k u =0$ for
$J \in \Gamma_{k+1}(I)$, and using \eqref{A-commute}, we get that
\begin{equation*}
d E_n^ku  = E_n^{k+1} du, \quad 0 \le k \le n-2, 
\quad \text{and } dE_n^{n-1} = 0.
\end{equation*}
This completes the proof.

\end{proof}

The final result we need to prove is that the extensions $E_n^k$
preserve smoothness and polynomial properties. The operator $E_n^k$
can be expressed as
\[
E_n^ku  = \frac{1}{n} \sum_{\substack{I \in \Gamma_m\\ 1 \le m\le n}} (-1)^{m+1} E_n^k(I) u,
\]
where each operator $E_n^k(I)$ is given by 
\[
E_n^k(I) = 
\sum_{\substack{J \in \Gamma_s(I) \\ 0 \le s \le k}}
\frac{\phi_J}{\lambda_I^{s+1}} \wedge A^k_{I,J} u.
\]
We will show below that each operator $E_n^k(I)$ preserves smoothness
and polynomial properties.  It is worth noting that it follows from
Lemma~\ref{lem:Aprop1} that the operators $E_n^k(I)$ map
$\Lambda^k(\partial \S_n)$ to $\lambda_I^{-1}\Lambda^k(\S_n)$, and
also
\begin{equation}\label{E-map-prop}
\P_r\Lambda^k(\partial \S_n) \to \lambda_I^{-1}\P_{r+1}\Lambda^k(\S_n), \quad \text{and }
\P_r^-\Lambda^k(\partial \S_n) \to \lambda_I^{-1}\P_{r+1}^-\Lambda^k(\S_n).
\end{equation}
In fact, to obtain the result for the trimmed spaces, we also need to
use the wedge product property for these spaces, for example expressed
by formula (3.16) of \cite{acta}.  To obtain preservation of
smoothness and polynomial properties, we need to remove the singular
factor $\lambda_I^{-1}$. The analysis below will lead to the following
fundamental result.

\begin{thm}\label{thm:pol-preserve}
The extension operator $E_n^k$ maps the spaces
$\Lambda^k(\partial \S_n) \to \Lambda^k(\S_n)$, 
$\P_r\Lambda^k(\partial \S_n) \to\P_r\Lambda^k(\S_n)$, and
$\P_r^-\Lambda^k(\partial \S_n) \to \P_r^-\Lambda^k(\S_n)$.
\end{thm}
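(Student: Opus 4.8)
The plan is to show that each operator $E_n^k(I)$ maps $\Lambda^k(\partial \S_n)$ to $\Lambda^k(\S_n)$ (and respects the polynomial and trimmed polynomial structure), so that the singular factor $\lambda_I^{-1}$ appearing in \eqref{E-map-prop} actually cancels. Since the full operator $E_n^k$ is a linear combination of the $E_n^k(I)$, and since the codomains $\P_r\Lambda^k(\S_n)$ and $\P_r^-\Lambda^k(\S_n)$ are linear spaces, the theorem follows once the claim for each fixed $I$ is established. The strategy is to argue by downward induction on $|I|$, starting from the maximal index set $I = \{0,1,\ldots,n\}$, for which $\lambda_I \equiv 1$ on $\S_n$ and there is nothing to prove.

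For the inductive step, fix $I \in \Gamma_m$ with $m < n$ and pick a vertex index $p \notin I$; set $I' = \{p, I\}$. The key observation is that the trace identity \eqref{trace-A} for the coefficient operators, $\tr_{\lambda_p = 0} A_{I',J}^k u = \tr_{\lambda_p = 0} A_{I,J}^k u$, combined with $\tr_{\lambda_p=0}\lambda_{I'} = \tr_{\lambda_p=0}\lambda_I$ and $\tr_{\lambda_p=0}\phi_J = \tr_{\lambda_p=0}\phi_J$ for $J \subset I$ (note $p \notin J$), should force $\tr_{\lambda_p=0}[E_n^k(I)u - E_n^k(I')u] = 0$ after matching up the terms: the terms of $E_n^k(I')$ indexed by $J \subset I$ agree on $\{\lambda_p = 0\}$ with the corresponding terms of $E_n^k(I)$, while the extra terms of $E_n^k(I')$ with $p \in J$ carry a factor $\phi_J$ that vanishes there. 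Hence $E_n^k(I)u - E_n^k(I')u$ vanishes on $\{\lambda_p = 0\}$ and so is divisible by $\lambda_p$; but from \eqref{E-map-prop} it is also of the form $\lambda_I^{-1}(\text{polynomial form})$, and $\lambda_p$ does not divide $\lambda_I$ unless $p \in I$ — so in fact the only pole of $E_n^k(I)u$ along $\{\lambda_I = 0\}$ that is not shared by $E_n^k(I')u$ must be absent. Running this over all $p \notin I$, and using that $E_n^k(I')$ has at worst a pole along $\{\lambda_{I'} = 0\}$ (weaker, by the induction hypothesis it has none), one concludes $E_n^k(I)u$ has no pole at all, i.e., it is smooth (resp.\ polynomial of the stated degree). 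The polynomial-degree bookkeeping is handled by \eqref{E-map-prop}: once $\lambda_I^{-1}$ is removed, $\lambda_I^{-1}\P_{r+1}\Lambda^k(\S_n) \cap \Lambda^k(\S_n)$ lands inside $\P_{r+1}\Lambda^k(\S_n)$, and a separate degree-counting argument (using that the vertex-interpolant term has degree $\le 1$ and the difference terms, as in \eqref{Pudiff}, carry extra factors of $\lambda_I$) should bring this down to $\P_r\Lambda^k(\S_n)$; the trimmed case uses the wedge-product property cited after \eqref{E-map-prop} together with part ii) of Lemma~\ref{lem:Aprop1}.

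A cleaner alternative, which I would try first, is to produce an \emph{explicit} alternative representation of $E_n^k(I)u$ with no $\lambda_I$ in the denominator, paralleling what was done for $k=0$ in Section~\ref{sec:zero-forms} and promised for $k=1$ at the end of Section~\ref{sec:one-forms}. Concretely: fix $p \notin I$ and write each $A_{I,J}^k u$ (and each $\phi_J/\lambda_I^{s+1}$) relative to the point $P_{I,p}x$, using the fundamental-theorem-of-calculus trick \eqref{Pudiff}/\eqref{R-contract}, so that every term acquires a compensating factor of $\lambda_I$; collecting terms, $E_n^k(I)u$ becomes a sum of a manifestly smooth ``leading'' piece (a pullback-type term) plus integrals of $du$-type expressions against smooth forms, each visibly in $\Lambda^k(\S_n)$, with the polynomial degree and trimmed structure readable off directly from Lemma~\ref{lem:Rprop1}. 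I expect the main obstacle to be the combinatorial/algebraic verification that, after this rewriting, all the $\lambda_I^{-1}$ factors genuinely cancel across the double sum over $J \in \Gamma_s(I)$ and $s$ — this is exactly where the precise choice of the constants $c_{s,n}^k$ and the identities \eqref{c-relations}, \eqref{dphi-over-lambda} must be used, and it is the step that is delicate rather than routine. The trace-based induction argument of the previous paragraph is the fallback if the explicit cancellation proves too unwieldy to write out cleanly.
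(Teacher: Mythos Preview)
Your first approach, the downward induction on $|I|$, has a genuine error that breaks it for $k \ge 1$. From the trace identity \eqref{trace-A} you correctly deduce $\tr_{\lambda_p = 0}\bigl[E_n^k(I)u - E_n^k(I')u\bigr] = 0$. But for a $k$-form with $k \ge 1$, vanishing trace on a hypersurface does \emph{not} imply the form is divisible by the defining function of that hypersurface: the trace captures only the tangential part, and any form of the type $d\lambda_p \wedge \alpha$ has zero trace on $\{\lambda_p = 0\}$ without being divisible by $\lambda_p$. Indeed, if you inspect the proof of Lemma~\ref{lem:Rtrace}, you see that $D_x F_I$ and $D_x F_{I'}$ differ by $(y - x_p)\,d\lambda_p$, so $R_{I,J}^k u$ and $R_{I',J}^k u$ agree on $\{\lambda_p = 0\}$ only when applied to tangent vectors, not in full. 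Hence the divisibility claim fails, and with it the subsequent pole-cancellation argument (which, even granting divisibility, is not quite right either: $\lambda_p / \lambda_I$ is divisible by $\lambda_p$ yet still has a pole along $\{\lambda_I = 0\}$).

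Your second approach is the correct direction, and it is what the paper does, though not via a reference point $p \notin I$. Instead, the paper produces a symmetric representation (Lemma~\ref{lem:Enkrep}):
\[
E_n^k(I)u = \frac{1}{m+1}\Bigl[\sum_{j \in I} P_{I,j}^*u + dQ_n^k u + Q_n^{k+1} du\Bigr],
\qquad
Q_n^k u = \sum_{\substack{J \in \Gamma_s(I)\\ 1\le s\le k}} \frac{1}{s}\,\frac{(\delta\phi)_J}{\lambda_I^{s}} \wedge A_{I,J}^k u.
\]
The point is that $Q_n^k$ carries only $\lambda_I^{-s}$ (one power fewer than the $\lambda_I^{-(s+1)}$ in $E_n^k(I)$), and this matches exactly the factor $\lambda_I^s$ supplied by $A_{I,J}^k$ via Lemma~\ref{lem:Aprop1}. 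So $Q_n^k$ maps $\Lambda^k(\partial\S_n)$ to $\Lambda^{k-1}(\S_n)$ and $\P_r\Lambda^k(\partial\S_n)$ to $\P_{r+1}\Lambda^{k-1}(\S_n)$, giving the smooth and $\P_r$ cases directly. The trimmed case is then finished by the observation you flagged: one shows $\P_r\Lambda^k(\S_n) \cap \lambda_I^{-1}\P_{r+1}^-\Lambda^k(\S_n) = \P_r^-\Lambda^k(\S_n)$, combining the $\P_r$ result just proved with \eqref{E-map-prop}. The derivation of the representation is indeed the delicate combinatorial step you anticipated; it is carried out by an auxiliary induction (Lemmas~\ref{lem:En0rep} and~\ref{lem:ddeltaphi}) that repeatedly uses \eqref{dphi-over-lambda} and \eqref{A-commute}.
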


The proof of this result will utilize the following 
alternative  representation of the operators $E_n^k(I)$.
\begin{lem}
\label{lem:Enkrep}
The operators $E_n^k(I)$ admit the representation 
\begin{equation}
\label{Enk-rep}
E_n^k(I)u = 
\frac{1}{m+1}
\Big[\sum_{j \in I} P_{I,j}^*u +d Q_{n}^k u+ Q_{n}^{k+1} du\Big],
\end{equation}
where the operator $Q_n^k = Q_n^k(I)$ is given by
\begin{equation*}
Q_n^k u= 
\sum_{\substack{J \in \Gamma_{s}(I) \\1\le s \le k}} 
\frac{1}{s} \frac{(\delta \phi)_J}{\lambda_I^s} \wedge  A_{I,J}^k u.
\end{equation*}
\end{lem}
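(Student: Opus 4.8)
The plan is to compute $d Q_n^k u$ by the Leibniz rule, rewrite the result using \eqref{dphi-over-lambda} together with the commuting relation \eqref{A-commute} of Proposition~\ref{prop:A-commute-prop}, and match the outcome term by term with $(m+1) E_n^k(I)u$ (here and below $|I| = m+1$). The one genuinely new ingredient is the Whitney-form identity: for $I\in\Gamma$ and $J\in\Gamma_s(I)$ with $s\ge 1$,
\[
d\Big(\frac{(\delta\phi)_J}{\lambda_I^{s}}\Big)=\frac{s(m+1)\,\phi_J}{\lambda_I^{s+1}}-\frac{s}{\lambda_I^{s+1}}\sum_{\ell\in I\setminus J}(\delta\phi)_{\{\ell,J\}}.
\]
To obtain it I would write $(\delta\phi)_J=\sum_{i=0}^s(-1)^i\phi_{J(\hat i)}$, apply \eqref{dphi-over-lambda} to each $\phi_{J(\hat i)}/\lambda_I^{s}$ (note $|J(\hat i)|=s$), and split the inner sum over $\ell\in I\setminus J(\hat i)=(I\setminus J)\cup\{j_i\}$. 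The terms with $\ell=j_i$ collapse to $(s+1)\phi_J$ because $\phi_{\{j_i,J(\hat i)\}}=(-1)^i\phi_J$, and the terms with $\ell\in I\setminus J$ are summed by the defining relation $\sum_{i}(-1)^i\phi_{\{\ell,J(\hat i)\}}=\phi_J-(\delta\phi)_{\{\ell,J\}}$; since $|I\setminus J|=m-s$, the two pieces combine to the stated right-hand side.

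With this identity in hand I would expand $d Q_n^k u$ by the Leibniz rule into a sum over $J\in\Gamma_s(I)$, $1\le s\le k$, of the two pieces $\tfrac1s\,d\big((\delta\phi)_J/\lambda_I^{s}\big)\wedge A_{I,J}^k u$ and $\tfrac{(-1)^{s-1}}{s}\big((\delta\phi)_J/\lambda_I^{s}\big)\wedge d(A_{I,J}^k u)$. Substituting the identity into the first family produces the main term $(m+1)\sum_{1\le s\le k}\sum_{J}\tfrac{\phi_J}{\lambda_I^{s+1}}\wedge A_{I,J}^k u$, which by the definition of $E_n^k(I)$ equals $(m+1)\big(E_n^k(I)u-\sum_{j\in I}\tfrac{\lambda_j}{\lambda_I}P_{I,j}^*u\big)$, together with a correction $-\sum_{1\le s\le k}\sum_J\tfrac1{\lambda_I^{s+1}}\sum_{\ell\in I\setminus J}(\delta\phi)_{\{\ell,J\}}\wedge A_{I,J}^k u$; re-indexing this correction by $J'=\{\ell\}\cup J\in\Gamma_{s+1}(I)$ and using that $\delta\phi$ is alternating under reordering of its index set turns it into $-\sum_{2\le s\le k+1}\sum_{J'\in\Gamma_s(I)}\tfrac1{\lambda_I^{s}}(\delta\phi)_{J'}\wedge(\delta A_I^ku)_{J'}$. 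For the second family I would invoke \eqref{A-commute} in the form $d(A_{I,J}^ku)=(-1)^sA_{I,J}^{k+1}du-(-1)^s s\,(\delta A_I^ku)_{J}$; the $A^{k+1}du$ contribution then equals $-Q_n^{k+1}du+R$, where $R:=\sum_{J\in\Gamma_{k+1}(I)}\tfrac1{k+1}\tfrac{(\delta\phi)_J}{\lambda_I^{k+1}}\wedge A_{I,J}^{k+1}du$ is the missing $s=k+1$ slice of $Q_n^{k+1}du$, and the $(\delta A_I^ku)$ contribution equals $\sum_{1\le s\le k}\sum_{J}\tfrac1{\lambda_I^{s}}(\delta\phi)_J\wedge(\delta A_I^ku)_J$.

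Collecting these pieces, the two families of $(\delta A_I^ku)$-sums cancel for every $2\le s\le k$, leaving only the indices $s=1$ and $s=k+1$. At $s=k+1$, specializing \eqref{A-commute} to $s=k+1$ and using $A_{I,J}^k=0$ for $J\in\Gamma_{k+1}(I)$ gives $A_{I,J}^{k+1}du=(k+1)(\delta A_I^ku)_J$, so $R$ coincides with the surviving $s=k+1$ term from the re-indexed correction and the two cancel. At $s=1$, using $\phi_{\{j\}}=\lambda_j$, $A_{I,\{j\}}^k=P_{I,j}^*$ and $(\delta\phi)_{\{i,j\}}=\lambda_j-\lambda_i$, the leftover is $\sum_{\{i,j\}\subset I}\tfrac{\lambda_j-\lambda_i}{\lambda_I}\big(P_{I,j}^*u-P_{I,i}^*u\big)$, and a short direct expansion (using $\sum_{j\in I}\lambda_j=\lambda_I$ and $|I|=m+1$) shows this equals $(m+1)\sum_{j\in I}\tfrac{\lambda_j}{\lambda_I}P_{I,j}^*u-\sum_{j\in I}P_{I,j}^*u$ --- precisely the term needed to turn the defect $(m+1)\sum_j\tfrac{\lambda_j}{\lambda_I}P_{I,j}^*u$ into $\sum_{j\in I}P_{I,j}^*u$. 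Rearranging then yields $(m+1)E_n^k(I)u=\sum_{j\in I}P_{I,j}^*u+dQ_n^k u+Q_n^{k+1}du$, which is \eqref{Enk-rep}.

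I expect the main difficulty to be bookkeeping rather than conceptual: apart from the Whitney-form identity all the inputs are already in place, but carrying out the re-indexing $\sum_J\sum_\ell\leadsto\sum_{J'}$ with the correct orientations, and separating out the boundary indices $s=0,1,k+1$ where the sums degenerate, needs care. It is exactly the $s=1$ boundary term that accounts for the \emph{average} $\tfrac1{m+1}\sum_{j\in I}P_{I,j}^*u$ in \eqref{Enk-rep}, rather than the convex combination $\sum_{j\in I}\tfrac{\lambda_j}{\lambda_I}P_{I,j}^*u$ appearing inside $E_n^k(I)$, and the $s=k+1$ boundary term is where the vanishing $A_{I,J}^k=0$ for $|J|>k+1$ enters.
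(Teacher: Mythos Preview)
Your argument is correct; all the pieces check out, including the Whitney-form identity, the re-indexing with signs, and the treatment of the boundary indices $s=1$ and $s=k+1$.

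Your approach uses the same three ingredients as the paper---the Whitney-form identity (the paper's Lemma~\ref{lem:ddeltaphi}, which is your pointwise identity summed over $J\in\Gamma_s(I)$ and wedged with $A_{I,J}^k u$), the commuting relation \eqref{A-commute}, and the $s=1$ computation (the paper's Lemma~\ref{lem:En0rep})---but organizes them differently. The paper introduces auxiliary ``partial'' operators $E_{n,\ell}^k$ and $Q_{n,\ell}^k$ (truncating the sums at $s\le\ell$) and proves by induction on $\ell$ a representation \eqref{Enlrep} that reduces to \eqref{En0rep} at $\ell=0$ and to \eqref{Enk-rep} at $\ell=k$; each induction step adds exactly one $s$-level and uses Lemma~\ref{lem:ddeltaphi} once. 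You instead expand $dQ_n^k u$ directly, obtain all $s$-levels at once, and then clean up the two boundary indices. Your route is more economical in that it avoids the auxiliary operators, at the cost of a denser bookkeeping step; the paper's induction makes the telescoping transparent one level at a time. In particular, your $s=1$ leftover is precisely the rearranged content of Lemma~\ref{lem:En0rep}, and your pointwise Whitney identity is a slight sharpening of Lemma~\ref{lem:ddeltaphi} (it holds before summing over $J$), which could be stated separately.
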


Since the derivation of the alternative representation of the
operators $E_n^k$ is slightly technical, we will first use the
representation \eqref{Enk-rep} to prove
Theorem~\ref{thm:pol-preserve}.

\begin{proof}(of Theorem~\ref{thm:pol-preserve}) It is enough to show
  the desired mapping properties for each operator $E_n^k(I)$.  From
  the result of Lemma~\ref{lem:Aprop1}, it easily follows that the
  operator $Q_n^k(I)$ maps $\Lambda^{k}(\partial \S_n)$ to
  $\Lambda^{k-1}(\S_n)$ and $\P_r \Lambda^{k}(\partial \S_n)$ to
  $\P_{r+1} \Lambda^{k-1}(\S_n)$. By combining this with the fact that
  the operators $P_{I,j}$ are linear, the desired result in the smooth
  case and the full polynomial case follows.  Furthermore, since
  $\P_r^-\Lambda^k(\partial \S_n)$ is a subspace of
  $\P_r\Lambda^k(\partial \S_n)$, it follows from \eqref{E-map-prop}
  that $E_n^k(I)$ maps $\P_r^-\Lambda^k(\partial \S_n)$ into
\begin{equation}\label{pol-intersect}
\P_{r} \Lambda^{k}(\S_n) \cap  \lambda_I^{-1}\P_{r+1}^-\Lambda^k(\S_n).
\end{equation}
However, this space is identical to $\P_{r}^-\Lambda^k(\S_n)$. To see
this, let $u \in \P_{r+1}^-\Lambda^k(\S_n)$ such that
$\lambda_I^{-1}u \in \P_{r} \Lambda^{k}(\S_n)$.  It follows from the
definition of the trimmed spaces that for any $x_j \in \Delta_0(\S_n)$
\begin{equation*}
u \lrcorner (x-x_j) \in \P_{r+1}\Lambda^{k-1}(\S_n).
\end{equation*}
But since $\lambda_I^{-1}u$ is also in $\P_{r} \Lambda^{k}(\S_n)$
we have
\begin{equation*}
\lambda_I^{-1} (u \lrcorner (x-x_j)) = (\lambda_I^{-1}u) \lrcorner (x-x_j)
\in \P_{r+1}\Lambda^{k-1}(\S_n).
\end{equation*}
In other words, the polynomial form $u \lrcorner (x-x_j)$ has
$\lambda_I$ as a linear factor, and as a consequence,
$\lambda_I^{-1} (u \lrcorner (x-x_j))$ is also a polynomial form,
which then must be in $\P_{r}\Lambda^{k-1}(\S_n)$.  This implies that
$\lambda_I^{-1}u \in \P_{r}^- \Lambda^{k}(\S_n)$.  This shows that the
space given by \eqref{pol-intersect} is included in
$\P_{r}^-\Lambda^k(\S_n)$, and the opposite inclusion is
straightforward to check.
\end{proof}

It remains to establish the alternative representation \eqref{Enk-rep}
for the operators $E_n^k(I)$. Throughout the discussion below, the
index set $I \in \Gamma_m$, $1 \le m \le n$, will be considered
fixed. We first study the primal operator $E_{n,0}^k = E_{n,0}^k(I)$
given by
\[
E_{n,0}^ku  = 
\sum_{J \in \Gamma_0(I)}
\frac{\phi_J}{\lambda_I} \wedge A^k_{I,J} u 
= \sum_{i \in I}
\frac{\lambda_i}{\lambda_I} \wedge P_{I,i}^* u.
\]
In particular, for $k=0$ we have $E_{n,0}^0 = E_n^0(I)$.

\begin{lem}
\label{lem:En0rep}
For $I \in \Gamma_m$, the operator $E_{n,0}^k = E_{n,0}^k(I)$ has a
representation of the form
\begin{equation}
\label{En0rep}
E_{n,0}^k u =  \frac{1}{m+1} \Big[\sum_{j \in I} P_{I,j}^*u 
+ \lambda_I^{-1} \sum_{J \in \Gamma_1(I)} (\delta \phi)_J  
\wedge (\delta A_I^{k} u)_{J}\Big].
\end{equation}
\end{lem}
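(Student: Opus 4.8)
The plan is to rewrite the sum $\sum_{i \in I}\frac{\lambda_i}{\lambda_I}P_{I,i}^*u$ by artificially symmetrizing it, in the same spirit as the computation for zero forms in Section~\ref{sec:zero-forms}. Write $m+1 = |I|$. Since $\sum_{j \in I}\lambda_j = \lambda_I$, for each fixed $j \in I$ we have the identity
\[
\frac{1}{m+1}\sum_{i \in I} P_{I,j}^*u
= \frac{1}{m+1}\sum_{i \in I}\frac{\lambda_i}{\lambda_I} P_{I,j}^*u,
\]
so averaging over $j \in I$ as well gives
\[
\frac{1}{m+1}\sum_{j \in I} P_{I,j}^*u
= \frac{1}{m+1}\sum_{i,j \in I}\frac{\lambda_i}{\lambda_I} P_{I,j}^*u.
\]
Subtracting this from $(m+1)$ copies of $E_{n,0}^ku$ rearranged appropriately, the idea is to produce the antisymmetric differences $P_{I,i}^*u - P_{I,j}^*u$ weighted by $\lambda_i\lambda_j/\lambda_I^2$, which is exactly what $(\delta\phi)_J$ and $(\delta A_I^k u)_J$ will assemble. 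Concretely, I would start from
\[
(m+1) E_{n,0}^k u = \sum_{j\in I} P_{I,j}^*u + \Big[(m+1)\sum_{i\in I}\frac{\lambda_i}{\lambda_I}P_{I,i}^*u - \sum_{j\in I}P_{I,j}^*u\Big],
\]
and then rewrite the bracketed term, using $\sum_{j\in I}\lambda_j = \lambda_I$, as a double sum $\sum_{i,j\in I}\frac{\lambda_i}{\lambda_I}\big(P_{I,i}^*u - P_{I,j}^*u\big)$ plus terms that cancel. Each such difference $P_{I,i}^*u - P_{I,j}^*u$ is (recalling $A_{I,\{i\}}^k = P_{I,i}^*$) precisely $-(\delta A_I^k u)_{\{i,j\}}$ with an appropriate sign coming from the ordering in $\Gamma_1(I)$.

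The matching on the geometric side is the bookkeeping to get right. For $J=\{i,j\}\in\Gamma_1(I)$ with $i<j$, by definition $(\delta\phi)_J = \phi_{\{j\}} - \phi_{\{i\}} = \lambda_j - \lambda_i$; and $(\delta A_I^k u)_J = A_{I,\{j\}}^k u - A_{I,\{i\}}^k u = P_{I,j}^*u - P_{I,i}^*u$. So the wedge $(\delta\phi)_J \wedge (\delta A_I^k u)_J$ — note the wedge with a $0$-form $(\delta\phi)_J$ is just scalar multiplication — equals $(\lambda_j - \lambda_i)(P_{I,j}^*u - P_{I,i}^*u)$. I would expand $\sum_{J\in\Gamma_1(I)}(\lambda_j-\lambda_i)(P_{I,j}^*u - P_{I,i}^*u)$ over unordered pairs $\{i,j\}$, which becomes $\sum_{i,j\in I}\lambda_i(P_{I,i}^*u - P_{I,j}^*u)$ (the cross terms symmetrize nicely: each unordered pair contributes twice with a sign flip that the antisymmetry of both factors cancels, leaving a clean symmetric double sum). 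Dividing by $\lambda_I$ then matches the bracketed expression above after verifying the diagonal $i=j$ terms vanish and the constant $\frac{1}{m+1}$ is correctly placed.

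The main obstacle I anticipate is purely combinatorial sign-and-index tracking: making sure that the reindexing from the double sum over $I\times I$ to the sum over $\Gamma_1(I)$ produces exactly the factor implicit in $(\delta\phi)_J\wedge(\delta A_I^k u)_J$ with no stray sign or factor of $2$, and confirming that the terms I claim cancel (the ``symmetric'' part $\sum_{i,j}\lambda_i P_{I,j}^*u$ type pieces) genuinely telescope against $\sum_{j\in I}P_{I,j}^*u$ after using $\sum_i \lambda_i = \lambda_I$. I would organize this by fixing the convention that elements of $I$ inherit the increasing order from $\{0,\dots,n\}$, expanding everything over ordered pairs $(i,j)$ with $i,j\in I$, and only at the very end grouping into $\Gamma_1(I)$. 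Once the algebra is lined up, the identity \eqref{En0rep} drops out; no analytic input beyond the definitions and $\sum_{j\in I}\lambda_j=\lambda_I$ is needed.
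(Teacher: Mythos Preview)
Your approach is correct and essentially identical to the paper's: the paper also writes $\lambda_I^{-1}\sum_{i\in I}\lambda_i P_{I,i}^*u = \frac{1}{m+1}\sum_{j\in I}\bigl[P_{I,j}^*u + \lambda_I^{-1}\sum_{i\in I}\lambda_i(P_{I,i}^*u - P_{I,j}^*u)\bigr]$ using $\sum_{i\in I}\lambda_i=\lambda_I$, then pairs the double sum over $i\neq j$ into $\sum_{j<i}(\lambda_i-\lambda_j)(P_{I,i}^*u - P_{I,j}^*u)$ and identifies this with $\sum_{J\in\Gamma_1(I)}(\delta\phi)_J\wedge(\delta A_I^k u)_J$. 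Your sign and factor-of-$2$ worries are unfounded---the antisymmetry of both factors makes the pairing clean, exactly as you anticipated.
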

\begin{proof}
\begin{align*}
\lambda_I^{-1} \sum_{i \in I} \lambda_i P_{I,i}^*u &= \frac{1}{m+1} 
\sum_{j \in I} [P_{I,j}^*u + \lambda_I^{-1} \sum_{i \in I} \lambda_i 
(P_{I,i}^*u -P_{I,j}^*u )] \\ 
&=  \frac{1}{m+1} \Big[\sum_{j \in I} P_{I,j}^*u 
+  \lambda_I^{-1} \sum_{i \in I}  \sum_{\substack{j \in I\\ j < i}}
(\lambda_i - \lambda_j)(P_{I,i}^*u -P_{I,j}^*u )\Big]\\
&= \frac{1}{m+1}  \Big[\sum_{j \in I} P_{I,j}^*u +   \lambda_I^{-1} 
\sum_{J \in \Gamma_1(I)} (\delta \phi)_J \wedge (\delta A_I^ku)_{J} \Big].
\end{align*}
The representation for $E_{n,0}^k$ follows immediately.
\end{proof}

To establish an analogous result more generally, we will also need the
following preliminary result.
\begin{lem}
\label{lem:ddeltaphi}
For  $I \in \Gamma_m$ and $0 \le s \le m$, the following identity holds.
\begin{multline*}
\sum_{J \in \Gamma_s(I)} d \Big(\frac{(\delta \phi)_J}{\lambda_I^s} \Big) 
\wedge A_{I,J}^k u
+ \frac{s}{\lambda_I^{s+1} }\sum_{J \in \Gamma_{s+1}(I)} (\delta \phi)_J 
\wedge (\delta A_I^k u)_{J}
\\
= \frac{(m+1)s}{\lambda_I^{s+1}} \sum_{J \in \Gamma_s(I)} \phi_J \wedge A_{I,J}^k u.
\end{multline*}
\end{lem}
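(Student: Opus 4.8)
The plan is to compute both sides by expanding the exterior derivative of $(\delta\phi)_J/\lambda_I^s$ using the Leibniz rule, and then to rearrange the sum over $J \in \Gamma_s(I)$ after applying the commutation relation \eqref{A-commute} for the operators $A_{I,J}^k$.

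First I would record the basic derivatives needed. By definition $(\delta\phi)_J = \sum_{i=0}^s (-1)^i \phi_{J(\hat i)}$, and since each $\phi_{J(\hat i)} \in \P_1^-\Lambda^{s-1}(\S_n)$, we have $d\phi_{J(\hat i)} = s\, d\lambda_{j_0}\wedge \cdots \widehat{d\lambda_{j_i}} \cdots \wedge d\lambda_{j_s}$ (the standard identity $d\phi_L = (|L|)\,\bigwedge_{\ell \in L} d\lambda_\ell$ for Whitney forms, up to sign). A short computation then shows $d(\delta\phi)_J = (s+1)\,\bigwedge_{j \in J} d\lambda_j$ when we sum the alternating pieces — here the point is that $d(\delta\phi)_J$ is a closed form proportional to the "volume form" of the face $[x_J]$. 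I would also use $d(\lambda_I^{-s}) = -s\lambda_I^{-s-1}\, d\lambda_I = -s\lambda_I^{-s-1}\sum_{i\in I} d\lambda_i$. Combining these via Leibniz gives
\[
d\Big(\frac{(\delta\phi)_J}{\lambda_I^s}\Big) = \frac{1}{\lambda_I^s} d(\delta\phi)_J - \frac{s\, d\lambda_I}{\lambda_I^{s+1}}\wedge (\delta\phi)_J .
\]

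Next I would wedge with $A_{I,J}^k u$ and sum over $J \in \Gamma_s(I)$. The second term contains $d\lambda_I \wedge (\delta\phi)_J = \sum_{i\in I} d\lambda_i \wedge (\delta\phi)_J$; for $i \in J$ this picks out, after antisymmetrization, contributions that rebuild $\phi_J$ itself (using $\phi_J$'s recursive structure $\phi_J \sim \sum \pm \lambda_{j_i}\, \widehat{d\lambda_{j_i}}\cdots$), while for $i \in I\setminus J$ it produces terms indexed by $\{i,J\} \in \Gamma_{s+1}(I)$ which, together with the first term $\lambda_I^{-s}d(\delta\phi)_J \wedge A_{I,J}^k u$, reorganize into the $(\delta\phi)$ on the larger index set $\{i,J\}$ wedged with $A_{I,J}^k u$ — i.e. exactly the shape of the second term on the left-hand side of the claimed identity, once we reindex and recognize $\sum_{i} (-1)^{?} A_{I,J}^k u$ as $(\delta A_I^k u)_{\{i,J\}}$ summed appropriately, or rather recognize that the coefficient-operator part is untouched and only the form part is being reassembled. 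The combinatorial heart is the identity
\[
\sum_{i\in I} d\lambda_i \wedge (\delta\phi)_J \;=\; (m+1)\,\phi_J \;-\; (\text{terms that telescope into } (\delta\phi)_{\{i,J\}}),
\]
valid for $I\in\Gamma_m$, which is where the factor $(m+1)$ on the right-hand side enters.

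The main obstacle I expect is precisely this last combinatorial identity: keeping the signs straight across the alternating sums defining $(\delta\phi)_J$ and $(\delta A_I^k u)_{J}$, and correctly matching the term with index $i\in I\setminus J$ in $d\lambda_i\wedge(\delta\phi)_J$ against the corresponding summand of $(\delta\phi)_{\{i,J\}}$. I would handle this by fixing $I = \{i_0,\dots,i_m\}$ and $J = \{j_0,\dots,j_s\} \subset I$, writing everything in terms of the fixed basis $\{d\lambda_{i_0},\dots,d\lambda_{i_m}\}$ restricted to a leaf where $\lambda_I \equiv 1$ (legitimate since the claimed identity is an identity of rational differential forms and both sides are homogeneous of the right degree in $\lambda_I$, so it suffices to check it after such a normalization, or equivalently to verify it degree-by-degree). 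A cleaner route, which I would try first, is to observe that the identity is exactly the statement $d E_{n,s}^k = (\text{lower/higher pieces})$ obtained by differentiating the partial sums in \eqref{def-Tk}; that is, Lemma~\ref{lem:ddeltaphi} is the inductive step that converts the $E_{n,0}^k$-representation of Lemma~\ref{lem:En0rep} into the full representation \eqref{Enk-rep}, so one can prove it by applying $d$ to the already-established identity of Lemma~\ref{lem:En0rep} with $s$ replaced by $s$, using Theorem~\ref{thm:cochain}'s computation $d E_n^k u = E_n^{k+1} du$ to control the cross terms. If that shortcut does not close cleanly, I would fall back to the direct combinatorial verification sketched above, organizing the bookkeeping by splitting the sum $\sum_{i\in I}$ into $i\in J$ and $i\in I\setminus J$ at the outset.
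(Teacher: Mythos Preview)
Your overall strategy---expand $d\bigl((\delta\phi)_J/\lambda_I^s\bigr)$, split the resulting sum according to whether the new index lies in $J$ or in $I\setminus J$, and reindex---is the same route the paper takes. But your execution contains several concrete missteps that would prevent the argument from closing.

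First, the formula $d(\delta\phi)_J = (s+1)\bigwedge_{j\in J} d\lambda_j$ is wrong on degree grounds: $(\delta\phi)_J$ is an $(s-1)$-form (each $\phi_{J(\hat i)}$ has $|J(\hat i)|=s$), so $d(\delta\phi)_J$ is an $s$-form, while $\bigwedge_{j\in J} d\lambda_j$ is an $(s+1)$-form. You have confused $d(\delta\phi)_J$ with $d\phi_J$. The paper sidesteps this entirely by never separating the numerator and denominator: it applies the already-established identity \eqref{dphi-over-lambda} directly to each summand $\phi_{J(\hat i)}/\lambda_I^s$, obtaining
\[
d\Big(\frac{(\delta\phi)_J}{\lambda_I^s}\Big)=\frac{s}{\lambda_I^{s+1}}\Big[(s+1)\phi_J+\sum_{i=0}^s(-1)^i\sum_{p\in I\setminus J}\phi_{\{p,J(\hat i)\}}\Big],
\]
where the $(s+1)\phi_J$ term comes from the case $p=j_i\in I\setminus J(\hat i)$. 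This packaged form makes the reindexing to $\Gamma_{s+1}(I)$ and the appearance of $(\delta A_I^k u)_J$ transparent.

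Second, the lemma does not use, and does not need, the relation \eqref{A-commute}. The operators $A_{I,J}^k u$ enter only as inert coefficients indexed by $J$; no differentiation of them occurs, and $(\delta A_I^k u)_J$ is purely the alternating sum over $J(\hat i)$. Invoking \eqref{A-commute} introduces $du$ and $d(A_{I,J}^k u)$ into an identity that contains neither.

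Third, your proposed shortcut is circular: Lemma~\ref{lem:ddeltaphi} is precisely the tool that drives the induction in the proof of Lemma~\ref{lem:Enkrep}, so you cannot derive it from the representation \eqref{Enk-rep} or from Theorem~\ref{thm:cochain} via that representation. The direct combinatorial verification is the only route, and the paper carries it out in the compact form above, with the remaining step being a double-sum reindexing over $J\in\Gamma_s(I)$, $p\in I\setminus J$ into $J'\in\Gamma_{s+1}(I)$ with a pair of omitted indices, yielding $-\sum_{J'\in\Gamma_{s+1}(I)}(\delta\phi)_{J'}\wedge(\delta A_I^k u)_{J'}+(m-s)\sum_{J\in\Gamma_s(I)}\phi_J\wedge A_{I,J}^k u$; the factor $(m+1)s$ then arises as $s(s+1)+s(m-s)$.
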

\begin{proof}
If $J \in \Gamma_{s-1}(I)$, we get from \eqref{dphi-over-lambda} that
\[
d\Big(\frac{\phi_J}{\lambda_I^s}\Big)  = s \sum_{p \in I \setminus J} 
\frac{\phi_{p,J}}{\lambda_I^{s+1}}.
\]
As a further consequence, we obtain that if $J \in \Gamma_s(I)$, then
\begin{align*}
d\Big(\frac{(\delta \phi)_J}{\lambda_I^s}\Big) 
&= \frac{s}{\lambda_I^{s+1}} \sum_{i =0}^s (-1)^{i}
\sum_{p \in I \setminus J(\hat i)}\phi_{p,J(\hat i)} \\
&= \frac{s(s+1)} {\lambda_I^{s+1}} \phi_J + \frac{s} {\lambda_I^{s+1}} 
\sum_{i =0}^s (-1)^{i}\sum_{p \in I \setminus J}\phi_{p,J(\hat i)}.
\end{align*}
If we wedge the second term with $A_{I,J}^k u$ and sum over
$J \in \Gamma_s(I)$, we obtain
\begin{multline*}
\sum_{J \in \Gamma_s(I)}\sum_{i =0}^s (-1)^{i}\sum_{p \in I \setminus J}
\phi_{p,J(\hat i)} \wedge A_{I,J}^k u
= - \sum_{J \in \Gamma_{s+1}(I)} \sum_{\substack{a,i =0 \\ a \neq i}}^{s+1} 
(-1)^{a+i} 
\phi_{J(\hat i)}\wedge A_{I,J(\hat a)} u 
\\
= - \sum_{J \in \Gamma_{s+1}(I)} (\delta \phi)_J \wedge (\delta A_I^ku)_{J}
+ (m-s)  \sum_{J \in \Gamma_{s}(I)} \phi_J \wedge A_{I,J}^k u.
\end{multline*}
The desired result follows by collecting terms. 
\end{proof}

To establish Lemma~\ref{lem:Enkrep}, we will make use of the following
operators,
\[
E_{n,\ell}^k    = E_{n,\ell}^k(I) = 
\sum_{\substack{J \in \Gamma_s(I)\\ 0 \le s \le \ell}}
\frac{\phi_J}{\lambda_I^{s+1}} \wedge A^k_{I,J} u, \quad 0 \le \ell \le k.
\]
In particular, we note that $E_{n,k}^k(I) = E_{n}^k(I)$.
\begin{proof} (of Lemma~\ref{lem:Enkrep})
We will prove by induction that the operator $E_{n,\ell}^k$ admits a
representation of the form
\begin{multline}
\label{Enlrep}
E_{n,\ell}^k u =  \frac{1}{m+1}\Big[\sum_{j \in I} P_{I,j}^*u + 
\sum_{J \in \Gamma_{\ell+1}(I)} \frac{(\delta \phi)_J}{\lambda_I^{\ell+1}}
\wedge (\delta A_I^k u)_{J}\\
+d Q_{n,\ell}^k  u+ Q_{n,\ell}^{k+1} du \Big],
\end{multline}
where,
\begin{equation*}
Q_{n,\ell}^k = 
\sum_{\substack{J \in \Gamma_{s}(I) \\1\le s \le \ell}} \frac{1}{s}
\frac{(\delta \phi)_J}{\lambda_I^s} \wedge  A_{I,J}^k u,
\end{equation*}
such that $Q_{n,k}^k = Q_n^k$ and $Q_{n,0}^k =0$.  Note that when
$\ell =0$, \eqref{Enlrep} is exactly the identity \eqref{En0rep},
while for $l=k$, $A_{I,J}^{k+1} du = (k+1) (\delta A^ku)_{I,J}$. This
gives
\begin{equation*}
Q_{n,k}^{k+1} du + 
\sum_{J \in \Gamma_{k+1}} \frac{(\delta \phi)_J}{\lambda_I^{k+1}}
\wedge (\delta A_I^k u)_{J} = Q_n^{k+1} du.
\end{equation*}
Therefore, the representation formula \eqref{Enk-rep} follows from
\eqref{Enlrep} for $l=k$.

It remains to carry out the induction argument to establish \eqref{Enlrep}.
If we assume that \eqref{Enlrep} holds for
$\ell-1$, then
\begin{multline*}
(m+1)E_{n,\ell}^k u  - \sum_{j \in I} P_{I,j}^* u
= dQ_{n,\ell -1}^k u + Q_{n,\ell -1}^{k+1} du \\
+ \sum_{J \in \Gamma_{\ell}(I)} \Big[(m+1)
 \frac{\phi_J}{\lambda_I^{\ell +1}} \wedge A_{I,J}^k u + 
\frac{(\delta \phi)_J}{\lambda_I^\ell} \wedge (\delta A_I^ku)_{J}\Big]
\\
= dQ_{n,\ell -1}^k u + Q_{n,\ell}^{k+1} du \\
+ \sum_{J \in \Gamma_{\ell}(I)} \Big[(m+1)
 \frac{\phi_J}{\lambda_I^{\ell +1}} \wedge A_{I,J}^k u +  
(-1)^{(\ell -1)} \frac{1}{\ell} \frac{(\delta \phi)_J}{\lambda_I^\ell}
\wedge dA_{I,J}^ku\Big],
\end{multline*}
where we have used the relation \eqref{A-commute}.
Now from Lemma~\ref{lem:ddeltaphi}, we get that 
\begin{multline*}
d(Q_{n,\ell}^k - Q_{n,\ell-1}^k) 
= \sum_{J \in \Gamma_{\ell}(I)}
\frac{1}{\ell} d\Big[\frac{(\delta \phi)_J}{\lambda_I^\ell} 
\wedge  A_{I,J}^k u\Big]
\\= 
\sum_{J \in \Gamma_{\ell}(I)} \frac{1}{\ell} 
\Big[(-1)^{\ell-1}\frac{(\delta \phi)_J}{\lambda_I^\ell} \wedge  d A_{I,J}^k u
+ (m+1)\ell \frac{\phi_J}{\lambda_I^{\ell+1}} \wedge  A_{I,J}^k u \Big]
\\
-  \sum_{J \in \Gamma_{\ell+1}(I)} 
 \frac{(\delta \phi)_J}{\lambda_I^{\ell +1}}
\wedge (\delta A_I^k u)_{J}.
\end{multline*}
Combining these results, we get
\begin{equation*}
(m+1)E_{n,\ell}^k u  -  \sum_{j \in I} P_{I,j}^* u
= d Q_{n,\ell}^k u + Q_{n,\ell}^{k+1}  du
+  \sum_{J \in \Gamma_{\ell+1}(I)} 
 \frac{(\delta \phi)_J}{\lambda_I^{\ell +1}}
\wedge (\delta A_I^k u)_{J},
\end{equation*}
which completes the induction argument, and the proof of the lemma.
\end{proof}

\section*{acknowledgement}The authors are grateful to Ralf Hiptmair
for pointing out the apparent similarity between the classical
rational blending method and constructions used in \cite{bubble-II}.

\bibliographystyle{amsplain}


\end{document}